\def\E{\mathbb E}
\def\p{\mathbb P}
\newcommand*{\ind}[1]{\mathbf{1}_{\{#1\}}}
\def\tX{\tilde{X}}
\newcommand{\R}{\mathbb{R}}
\newcommand{\N}{\mathbb{N}}
\newtheorem{lemma}{Lemma}
\newtheorem{thm}[lemma]{Theorem}
\newtheorem{prop}[lemma]{Proposition}
\newtheorem{cor}[lemma]{Corollary}
\newtheorem{defi}[lemma]{Definition}
\author{Rados{\l}aw Adamczak\thanks{Research partially supported by  MNiSW Grant N N201 608740 and the Foundation for Polish Science}\; and Witold Bednorz\thanks{Research partially supported by  MNiSW Grant N N201 608740}}
\title{Orlicz integrability of additive functionals of Harris
ergodic Markov chains}
\begin{document}
\maketitle
\begin{abstract}
For a Harris ergodic Markov chain $(X_n)_{n\ge 0}$, on a general
state space, started from the so called small measure or from the
stationary distribution  we provide optimal estimates for Orlicz
norms of sums $\sum_{i=0}^\tau f(X_i)$, where $\tau$ is the first
regeneration time of the chain. The estimates are expressed in
terms of other Orlicz norms of the function $f$ (wrt the
stationary distribution) and the regeneration time $\tau$ (wrt the
small measure). We provide applications to tail estimates for
additive functionals of the chain $(X_n)$ generated by unbounded
functions as well as to classical limit theorems (CLT, LIL,
Berry-Esseen).

\vskip0.25cm
\noindent AMS Classification: Primary 60J05, 60E15; Secondary 60K05, 60F05

\noindent Keywords: Markov chains, Young functions, Orlicz spaces, tail inequalities, limit theorems
\end{abstract}

\section{Introduction and notation}
Consider a Polish space $\mathcal{X}$ with the Borel
$\sigma$-field $\mathcal{B}$ and let $(X_n)_{n\ge 0}$ be a time
homogeneous Markov chain on $\mathcal{X}$ with a transition
function $P\colon \mathcal{X}\times \mathcal{B} \to [0,1]$.
Throughout the article we will assume that the chain is Harris
ergodic, i.e. that there exists a unique probability measure $\pi$
on $(\mathcal{X},\mathcal{B})$ such that
\begin{displaymath}
\|P^n(x,\cdot) - \pi\|_{TV} \to 0
\end{displaymath}
for all $x \in \mathcal{X}$, where $\|\cdot\|_{TV}$ denotes the total
variation norm, i.e. $\|\mu\|_{TV} = \sup_{A \in \mathcal{B}}
|\mu(A)|$ for any signed measure $\mu$.

One of the best known and most efficient tools of studying such
chains is the so called regeneration technique
\cite{NummSplit,ANsplit}, which we briefly recall bellow. We
refer the reader to the monographs \cite{Numm}, \cite{MT} and
\cite{Chen} for extensive description of this method and restrict
ourselves to the basics which we will need to formulate and prove
our results.

Below we assume that the chain is Harris ergodic.

One can show that under the above assumptions there exists a set (usually called \emph{small set})
$C \in \mathcal{E}^+ = \{A\in
\mathcal{B}\colon\pi(A)> 0\}$, a positive integer $m$,
$\delta
> 0$ and a Borell probability measure $\nu$ on $\mathcal{X}$
(\emph{small measure}) such that
\begin{align}\label{small_set}
P^m(x,\cdot) \ge \delta \nu(\cdot)
\end{align}
for all $x \in C$. Moreover one can always choose $m$ and $\nu$ in
such a way that $\nu(C) > 0$.

 Existence of the above objects allows for redefining the
chain (possibly on an enlarged probability space) together with
auxiliary regeneration structure. More precisely, one defines the
sequence $(\tX_n)_{n\ge0}$ and a sequence $(Y_n)_{n\ge 0}$ by
requiring that $\tX_0$ have the same distribution as $X_0$ and
specifying the conditional probabilities
\begin{align*}
&\p(Y_k =1,\tX_{km+1} \in dx_1,\ldots,\tX_{(k+1)m-1}\in
dx_{m-1},\tX_{(k+1)m} \in
dy|\mathcal{F}_{km}^{\tX},\mathcal{F}^Y_{k-1},\tX_{km}=x)\\
& = P(Y_k =1,\tX_{km+1} \in dx_1,\ldots,\tX_{(k+1)m-1}\in
dx_{m-1},\tX_{(k+1)m} \in dy|\tX_0 =x)\\
&=\ind{x\in C}\frac{\delta\nu(dy)}{P^m(x,dy)}P(x,dx_1)\cdots
P(x_{m-1},dy),
\end{align*}
where $\mathcal{F}_{km}^{\tX} = \sigma((\tX_{i})_{i\le km})$ and
$\mathcal{F}^Y_{k-1}=\sigma((Y_i)_{i\le k-1})$.

One can easily check that $(\tX_n)$ has the same distribution as
$(X_n)$ and so we may and will identify the two sequences (we will
suppress the tilde). The auxiliary variables $Y_n$ can be used to
introduce some independence which allows to recover many results
for Markov chains from corresponding statements for the
independent (or one-dependent) case. Indeed, observe that if we
define the stopping times
\begin{displaymath}
\tau(0) = \inf\{k\ge 0,Y_k = 1\}, \; \tau(i) =
\inf\{k>\tau(i-1)\colon Y_k=1\},\, i = 1,2,\ldots,
\end{displaymath}
then the blocks $R_{0} =
(X_0,\ldots,X_{\tau(0)m+m-1})$, $R_i = (X_{m(\tau(i-1) +
1)},\ldots,X_{m\tau(i)+m-1})$ are one-dependent, i.e. for all $k$
$\sigma(R_i, i < k)$ is independent of $\sigma(R_i, i> k)$. In the
special case, when $m=1$ (the so called \emph{strongly aperiodic
case}) the blocks $R_i$ are independent. Moreover, for $i \ge 1$ the
blocks $R_i$ form a stationary sequence.

In particular for any function $f \colon \mathcal{X} \to \R$, the
corresponding additive functional $\sum_{i=1}^n f(X_i)$ can be
split (modulo the initial and final segment) into a sum (of random length) of
one-dependent (independent for $m=1$) identically distributed
summands
\begin{displaymath}
s_i(f) = \sum_{j=m(\tau(i)+1)}^{m\tau(i+1)+m-1} f(X_j).
\end{displaymath}

A crucial and very useful fact is the following equality, which
follows from Pitman's occupation measure formula
(\cite{Pit1,Pit2}, see also Theorem 10.0.1 in \cite{MT}).

\begin{align}\label{Pitman_formula}
\E_{\nu} \sum_{i=0}^{\tau(0)} F(X_{mi},Y_i) =
\delta^{-1}\pi(C)^{-1} \E_\pi F(X_0,Y_0),
\end{align}
where by $\E_\mu$ we denote the expectation for the process with
$X_0$ distributed according to the measure $\mu$.

It is also worth noting that the distribution of $s_i(f)$ is equal
to the distribution of
\begin{displaymath}
S = S(f)= \sum_{i=0}^{\tau(0) m + m -1} f(X_i)
\end{displaymath}
provided that $X_0$ is distributed according to $\nu$.

In particular, by (\ref{Pitman_formula}) this easily implies that
\begin{align}\label{mean}
\E s_i(f) = \delta^{-1}\pi(C)^{-1}m\int_\mathcal{X} fd\pi.
\end{align}

The above technique of decomposing additive functionals of Markov
chains into independent or almost independent summands has proven
to be very useful in studying limit theorems for Markov chains
(see e.g. \cite{Numm,MT,Chen,Bo1,Bo2,Jo,RobRos}) as well as in
obtaining non-asymptotic concentration inequalities (see e.g.
\cite{Cle,DFMS,AdMarkovTail,AdBed}). The basic difficulty of this
approach is providing proper integrability for the variable $S$.
This is usually achieved either via pointwise drift conditions
(e.g. \cite{MT,Bax, DFMS, AdBed}), especially important in Markov
Chain Monte Carlo algorithms or other statistical applications,
when not much information regarding the behaviour of $f$ with
respect to the stationary measure is available. Such drift
conditions are also useful for quantifying the ergodicity of the
chain, measured in terms of integrability of the regeneration time
$T = \tau(1) - \tau(0)$ (which via coupling constructions can be translated in
the language of total variation norms or mixing coefficients).

Another line of research is more theoretic and concerns the
behaviour of the stationary chain. It is then natural to impose
conditions concerning integrability of $f$ with respect to the
measure $\pi$ and to assume some order of ergodicity of the chain.

Classical assumptions about integrability of $T$ are of the
form $\E T^\alpha < \infty$ or $\E\exp(\theta T) <
\infty$, which corresponds to polynomial or geometric ergodicity
of the chain. However recently new modified drift conditions have
been introduced \cite{DGM,DFMS}, which give other orders of
integrability of $T$ corresponding to various subgeometric
rates of ergodicity. Chains satisfying such drift conditions
appear naturally in Markov Chain Monte Carlo algorithms or
analysis of nonlinear autoregressive models \cite{DGM}.

From this point of view it is natural to ask questions concerning
more general notions of integrability of the variable $S$. In
this note we will focus on Orlicz integrability. Recall that
$\varphi\colon [0,\infty) \to \R_+$ is called a Young functions if
it is strictly increasing, convex and $\varphi(0) = 0$. For a real
random variable $X$ we define the Orlicz norm corresponding to
$\varphi$ as
\begin{displaymath}
\|X\|_\varphi =\inf\{C > 0\colon \E \varphi(|X|/C) \le 1\}.
\end{displaymath}
The Orlicz space associated to $\varphi$ is the set $L_\varphi$ of
random variables $X$ such that $\|X\|_\varphi < \infty$.

In what follows, we will deal with various underlying measures on
the state space $\mathcal{X}$ or on the space of trajectories of
the chain. To stress the dependence of the Orlicz norm on the
initial distribution $\mu$ of the chain $(X_n)$
we will denote it
by $\|\cdot\|_{\mu,\varphi}$, e.g. $\|S\|_{\pi,\varphi}$ will
denote the $\varphi$-Orlicz norm of the functional $S$ for the
stationary chain, whereas $\|S\|_{\nu,\varphi}$ the
$\varphi$-Orlicz norm of the same functional for the chain started
from initial distribution $\nu$. We will also denote by
$\|f\|_{\mu,\rho}$ the $\rho$-Orlicz norm of the function
$f\colon\mathcal{X} \to \R$ when the underlying probability
measure is $\mu$. Although the notation is the same for Orlicz norms of functionals of the Markov chains and functions on $\mathcal{X}$, the meaning will always be clear from the context and thus should not lead to misunderstanding.

\paragraph{Remarks} 1. Note that the distribution of $T$ is
independent of the initial distribution of the chain and is equal
to the distribution of $\tau(0)+1$ for the chain starting from the
measure $\nu$. Thus $\|T\|_\psi = \|\tau(0)+1\|_{\nu,\psi}$.

2. In \cite{NT2}, the authors consider {\emph ergodicity of order $\psi$} of a Markov chain, for a special class of nondecreasing functions $\psi\colon \N \to \R_+$. They call a Markov chain
ergodic of order $\psi$ iff $\E_\nu\psi^\circ(T) < \infty$, where $\psi^\circ (n) = \sum_{i=1}^n \psi(i)$. Since $\psi^\circ$ can be extended to a convex increasing function, one can easily see that this notion is closely related to the finiteness of a proper Orlicz norm of $T$ (related to properly shifted function $\psi^\circ$).

\medskip

We will be interested in the following two closely related
questions

\paragraph{Question 1}
Given two Young functions $\varphi$ and $\psi$ and a Markov chain
$(X_n)$ such that $\|T\|_\psi < \infty$, what do we have to
assume about $f \colon \mathcal{X} \to \R$ to guarantee that
$\|S\|_{\nu,\varphi} < \infty$ (resp. $\|S\|_{\pi,\varphi} <
\infty$)?

\paragraph{Question 2} Given two Young functions $\rho$ and $\psi$, a Markov chain
$(X_n)$ such that $\|T\|_\psi < \infty$ and $f \colon
\mathcal{X} \to \R$, such that $\|f\|_{\pi,\rho} <
\infty$, what can we say about the integrability of $S$ for the
chain started from $\nu$ or from $\pi$?

\medskip

As it turns out, the answers to both questions are surprisingly
explicit and elementary. We present them in Section
\ref{Section_main} (Theorems \ref{thm_nu}, \ref{thm_pi},
Corollaries \ref{question_2_cor_nu}, \ref{question_2_cor_pi}). The
upper estimates have very short proofs, which rely only on
elementary properties of Orlicz functions and the formula
(\ref{mean}). They are also optimal as can be seen from
Propositions \ref{weak_opt_nu}, \ref{weak_opt_pi} and Theorem
\ref{stron_opt_nu} proven in Section \ref{Section_proofs} by
constructing a general class of examples.

We would like to stress that despite being elementary, both the
estimates and the counterexamples have non-trivial applications
(some of which we present in the last section) and therefore are
of considerable interest. For example when specialized to
$\varphi(x) = x^2$, the estimates give optimal conditions for the
CLT or LIL for Markov chains under assumptions concerning the rate
of ergodicity and integrability of the test functions in the
stationary case.

In the following sections of the article we present the estimates,
demonstrate their optimality and provide applications to limit
theorems and tail estimates. For the reader's convenience we
gather all the basic facts about Orlicz spaces which are used in
the course of the proof in the appendix (we refer the reader to
the monographs \cite{Kr,MalBook,RaoRen} for more detailed account
on this class of Banach spaces).

\section{Main estimates \label{Section_main}}
To simplify the notation in what follows we will write $\tau$
instead of $\tau(0)$.

\subsection{The chain started from $\nu$\label{section_nu_thm}}

\paragraph{Assumption $(A)$} We will assume that
\begin{displaymath}
\lim_{x\to 0}\psi(x)/x = 0\;\textrm{and}\; \psi(1) \ge 1.
\end{displaymath}
Since any Young function on a probability space is equivalent to a
function satisfying this condition (see the definition of
domination and equivalence of functions below) it will not
decrease the generality of our estimates while allowing to
describe them in a more concise manner. In particular it assures
the correctness of the following definition (where by a
generalized Young function we mean a nondecreasing convex function
$\rho\colon [0,\infty)\to [0,\infty]$ with $\rho(0) = 0$,
$\lim_{x\to \infty}\rho(x) = \infty$).

\begin{defi} Let $\varphi$ and $\psi$ be Young functions. Assume that $\psi$ satisfies the assumption $(A)$.
Define the generalized Young function $\rho = \rho_{\varphi,\psi}$ by the
formula
\begin{displaymath}
\rho(x)=\sup_{y \ge 0} \frac{\varphi(xy) - \psi(y)}{y}.
\end{displaymath}
\end{defi}

\begin{thm}\label{thm_nu}  Let $\varphi$ and $\psi$ be Young functions. Assume that $\psi$ satisfies the assumption $(A)$.
Let $\rho = \rho_{\varphi,\psi}$. Then for any Harris ergodic
Markov chain $(X_n)$, a small set $C$ and a measure $\nu$
satisfying (\ref{small_set}), we have
\begin{align}
\Big\|\sum_{j=0}^{m\tau +m-1} f(X_j)\Big\|_{\nu,\varphi} \le
2m\|\tau+1\|_{\nu,\psi}\|f\|_{\pi, \rho}.
\end{align}
\end{thm}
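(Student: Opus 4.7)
Set $A = \|\tau+1\|_{\nu,\psi}$ and $B = \|f\|_{\pi,\rho}$, both of which we may assume finite. The plan is to verify that $\E_\nu\varphi(|S|/(2mAB)) \le 1$, where $S = \sum_{j=0}^{m(\tau+1)-1} f(X_j)$. The workhorse will be the Young-type inequality
\[
\varphi(xy) \le y\rho(x) + \psi(y), \quad x,y \ge 0,
\]
which is immediate from the definition $\rho(x) = \sup_{y\ge 0}[\varphi(xy)-\psi(y)]/y$.

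The first step is to use $|S|\le \sum_{j=0}^{m(\tau+1)-1}|f(X_j)|$ together with Jensen's inequality (convexity of $\varphi$) to get
\[
\varphi\Big(\frac{|S|}{2mAB}\Big) \le \frac{1}{m(\tau+1)}\sum_{j=0}^{m(\tau+1)-1}\varphi\Big(\frac{(\tau+1)|f(X_j)|}{2AB}\Big).
\]
Then I apply the Young-type inequality pointwise with $x = |f(X_j)|/B$ and $y = (\tau+1)/(2A)$ (a strictly positive quantity, since $\tau+1\ge 1$), which after rearranging yields
\[
\varphi\Big(\frac{|S|}{2mAB}\Big) \le \frac{1}{2mA}\sum_{j=0}^{m(\tau+1)-1}\rho(|f(X_j)|/B) + \psi\Big(\frac{\tau+1}{2A}\Big).
\]

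Taking $\E_\nu$ splits the bound into two parts. For the $\psi$ part, convexity of $\psi$ (with $\psi(0)=0$) gives $\psi((\tau+1)/(2A))\le \tfrac12\psi((\tau+1)/A)$, so its expectation is at most $1/2$ by the definition of $A$. For the $\rho$ part, formula (\ref{mean}) applied to the nonnegative function $\rho(|f|/B)$ in place of $f$ gives
\[
\E_\nu\sum_{j=0}^{m(\tau+1)-1}\rho(|f(X_j)|/B) = \frac{m}{\delta\pi(C)}\int_\mathcal{X}\rho(|f|/B)\,d\pi \le \frac{m}{\delta\pi(C)}
\]
by the definition of $B$. Thus this contribution is bounded above by $1/(2A\delta\pi(C))$.

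The one nonobvious step — and the only place where assumption $(A)$ genuinely enters — is the lower bound $A\delta\pi(C)\ge 1$, which forces the $\rho$ contribution to also be $\le 1/2$. I obtain it by applying (\ref{mean}) with $f\equiv 1$ to get $\E_\nu(\tau+1) = 1/(\delta\pi(C))$ and then invoking Jensen's inequality together with $\psi(1)\ge 1$:
\[
\psi\big(1/(A\delta\pi(C))\big) = \psi\big(\E_\nu(\tau+1)/A\big) \le \E_\nu\psi((\tau+1)/A) \le 1,
\]
which yields $1/(A\delta\pi(C))\le\psi^{-1}(1)\le 1$. Summing the two halves gives $\E_\nu\varphi(|S|/(2mAB))\le 1$, and the theorem follows from the definition of the Orlicz norm.
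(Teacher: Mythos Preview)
Your proof is correct and follows essentially the same route as the paper's: Jensen's inequality on the average over the block, the Young-type splitting $\varphi(xy)\le y\rho(x)+\psi(y)$, Pitman's formula~(\ref{mean}) to evaluate the $\rho$-sum, and the observation $A\ge\delta^{-1}\pi(C)^{-1}$ from $\psi(1)\ge1$. The only cosmetic difference is that you insert the factor $2$ at the outset (taking $y=(\tau+1)/(2A)$) and thereby land directly at $\E_\nu\varphi(|S|/(2mAB))\le 1$, whereas the paper first bounds $\E_\nu\varphi(|S|/(mAB))\le 2$ with $y=(\tau+1)/A$ and then halves via convexity at the end.
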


\begin{proof}

Let $a = \|\tau+1\|_{\nu,\psi}$, $b = \|f\|_{\pi,\rho}$. We have
\begin{align*}
\E_\nu\varphi\Big(\frac{S}{ab m}\Big) & =
\E_\nu\varphi\Big(\frac{\sum_{j=0}^{\tau m + m-1} f(X_j)}{ab m
}\Big)\\
&\le \E_\nu \sum_{j=0}^{\tau m + m-1}
\frac{\varphi(f(X_j)b^{-1}(\tau+1)a^{-1})}{(\tau+1)m}\\
&\le \E_\nu \sum_{j=0}^{\tau m + m-1}
\frac{\rho(f(X_j)b^{-1})}{am} + \E_\nu \sum_{j=0}^{\tau m + m-1}
\frac{\psi((\tau+1)
a^{-1})}{(\tau+1)m}\\
&=\delta^{-1}\pi(C)^{-1}  a^{-1}\E_\pi \rho(f(X_0)b^{-1}) +
\E_\nu\psi((\tau + 1)a^{-1}),
\end{align*}
where the first inequality follows from Jensen's inequality, the
second one from the definition of the function $\rho$ and the last
equality from (\ref{mean}). Let us now notice that another
application of (\ref{mean}) gives
\begin{displaymath}
\E_\nu (\tau+1) = \delta^{-1}\pi(C)^{-1}.
\end{displaymath}
Thanks to the assumption $\psi(1) \ge 1$, we have
$\E_\nu\psi((\tau+1)\delta\pi(C)) \ge
\psi(\E_\nu(\tau+1)\delta\pi(C))= \psi(1) \ge 1$, which implies
that $a \ge \delta^{-1}\pi(C)^{-1}$. Combined with the definition
of $a$ and $b$ this gives
\begin{displaymath}
\E_\nu\varphi\Big(\frac{S}{ab m}\Big) \le 2
\end{displaymath}
and hence $\E_\nu \varphi(S/(2abm)) \le \E_\nu
2^{-1}\varphi(S/abm) \le 1$, which ends the proof.
\end{proof}

As one can see the proof is very simple. At the same time, it
turns out that the estimate given in Theorem \ref{thm_nu} is
optimal (up to constants) and thus answers completely Question 1
for the chain starting from $\nu$. Below we present two results on
optimality of Theorem \ref{thm_nu} whose proofs are postponed to
the next section.

\paragraph{Domination and equivalence of functions}
Consider two functions $\rho_1, \rho_2 \colon [0,\infty)\to
[0,\infty]$. As is classical in the theory of Orlicz spaces with
respect to probabilistic measures, we say that $\rho_2$ dominates
$\rho_1$ (denoted by $\rho_1 \preceq \rho_2$) if there exist
positive constants $C_1,C_2$ and $x_0$, such that
\begin{align}\label{equivalence_eq}
\rho_1(x) \le C_1\rho_2(C_2x)
\end{align}
for $x\ge x_0$. One can easily check that if $\rho_i$ are Young
functions then $\rho_1 \preceq \rho_2$ iff there is an inclusion
and comparison of norms between the corresponding Orlicz spaces.
We will say that $\rho_1$ and $\rho_2$ are equivalent ($\rho_1
\simeq \rho_2$) iff $\rho_1 \preceq \rho_2$ and $\rho_2 \preceq
\rho_1$. One can also easily check that two Young functions are
equivalent iff they define equivalent Orlicz norms (and the same remains true for functions equivalent to Young functions). Note also that
if (\ref{equivalence_eq}) holds and $\rho_2$ is a Young function
then $\rho_1(x) \le \rho_2(\max(C_1,1)C_2x)$.

\vskip0.7cm \noindent Our first optimality result is
\begin{prop}[Weak optimality of Theorem
\ref{thm_nu}]\label{weak_opt_nu} Let $\varphi$ and $\psi$ be as in
Theorem \ref{thm_nu}. Assume that a Young function $\rho$ has the
property that for every Harris ergodic chain $(X_n)$, a small set
$C$, a small measure $\nu$ with $\|\tau\|_{\nu,\psi} < \infty$ and
every function $f\colon \mathcal{X} \to \R$ such that
$\|f\|_{\pi,\rho}< \infty$, we have $\|S(f)\|_{\nu,\varphi} <
\infty$. Then $\rho_{\varphi,\psi} \preceq \rho$.
\end{prop}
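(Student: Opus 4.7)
My plan is to argue by contrapositive: assume $\rho_{\varphi,\psi} \not\preceq \rho$ and build an explicit Harris ergodic chain and function that violate the conclusion. Unpacking the negation of domination, for every $M>0$ there are arbitrarily large $x$ with $\rho_{\varphi,\psi}(x) > M \rho(Mx)$. By diagonalization I fix a sequence $M_n \to \infty$ (which I can make grow as fast as I need) and extract $x_n \to \infty$ with $\rho_{\varphi,\psi}(x_n) \ge M_n \rho(M_n x_n)$. Using the supremum definition of $\rho_{\varphi,\psi}$, I then pick $y_n > 0$ with
\[
\varphi(x_n y_n) \ge \tfrac12 y_n \rho_{\varphi,\psi}(x_n) + \psi(y_n) \ge \tfrac12 M_n y_n \rho(M_n x_n) + \psi(y_n),
\]
and I may assume $y_n \in \mathbb{N}$ and $y_n \to \infty$.

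Next I build a branched Markov chain on $\mathcal{X} = \{0\} \sqcup \bigsqcup_n B_n$, where $B_n$ is a deterministic cycle of length $y_n$. State $0$ serves as the small set, taking $m = 1$, $\delta = 1$, $C = \{0\}$, and $\nu = P(0,\cdot) = \sum_n p_n \delta_{b_n}$, where $b_n$ is the first state of $B_n$. From $0$ the chain enters $b_n$ with probability $p_n$, traverses $B_n$ deterministically, and returns to $0$. Hence $\tau + 1 = y_n$ on branch $n$, and by Pitman's formula the invariant measure $\pi$ assigns mass proportional to $p_n$ to each state of $B_n$. Defining $f(0) = 0$ and $f \equiv x_n$ on $B_n$ yields $S = y_n x_n$ on branch $n$. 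Setting
\[
p_n \propto \frac{1}{n^2(\psi(y_n) + y_n \rho(x_n))},
\]
normalized via an inert dummy branch, both sums $\sum_n p_n \psi(y_n)$ and $\sum_n y_n p_n \rho(x_n)$ are bounded by $\sum 1/n^2 < \infty$, giving $\|\tau\|_{\nu,\psi} < \infty$ and, via Pitman's formula, $\|f\|_{\pi,\rho} < \infty$.

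The remaining task is to verify $\|S\|_{\nu,\varphi} = \infty$, equivalently that $\sum_n p_n \varphi(y_n x_n / C_0) > 1$ for every $C_0 > 0$. For $C_0 \le 1$ this follows directly from the lower bound on $\varphi(x_n y_n)$: after arranging (by inflating $y_n$ using convexity) that $y_n \rho(x_n) \gtrsim \psi(y_n)$, the sum is bounded below by something like $\sum_n M_n / (2n^2)$, which diverges when $M_n$ grows fast. The main obstacle is the case of large $C_0$: general Young functions need not satisfy the $\Delta_2$ condition, so $\varphi(z/C_0)$ can be much smaller than $\varphi(z)/C_0$ and the divergence at $C_0 = 1$ does not automatically propagate. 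I would resolve this by taking $M_n$ to grow extraordinarily fast (say $M_n = 2^{n^2}$), so that for every fixed $C_0$ one eventually has $M_n \ge C_0$, $\rho(M_n x_n) \ge \rho(C_0 x_n)$, and the margin in the first-paragraph bound is large enough to survive the rescaling $z \mapsto z/C_0$ via a quantitative application of the supremum definition at the shifted point $x_n/C_0$. Making this $C_0$-uniform lower bound rigorous is the technical crux of the argument.
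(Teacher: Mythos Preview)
Your overall strategy --- contrapositive, extracting a sequence from the failure of domination, invoking the supremum defining $\rho_{\varphi,\psi}$ to get pairs $(x_n,y_n)$, and building a branched chain with an atom --- is exactly the paper's. The construction you describe is, up to cosmetic differences (you add a separate atomic state $0$, the paper embeds the atom at the end of each branch), the ``main counterexample'' of Section~\ref{section_counterexample}.

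The gap you yourself flag, however, is real and your proposed fix does not close it. You obtain a lower bound on $\varphi(x_n y_n)$, but to show $\|S\|_{\nu,\varphi}=\infty$ you need $\sum_n p_n\,\varphi(\theta x_n y_n)$ to diverge for \emph{every} $\theta\in(0,1]$. Convexity only lets you push $\varphi$ estimates \emph{upward} in the argument: from $\varphi(z)$ you get $\varphi(\lambda z)\ge \lambda\varphi(z)$ for $\lambda\ge 1$, but nothing useful for $\lambda<1$. So a bound at $x_ny_n$ says nothing about $\varphi(\theta x_ny_n)$ when $\theta<1$, no matter how large the margin $M_n$ is (think of $\varphi(z)=e^z-1$: shrinking the argument by $\theta$ takes the value to roughly its $\theta$-th power, annihilating any polynomial margin). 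Your suggestion to ``apply the supremum definition at the shifted point $x_n/C_0$'' with the same $y_n$ yields only the upper bound $\varphi(x_ny_n/C_0)\le y_n\rho_{\varphi,\psi}(x_n/C_0)+\psi(y_n)$, which goes the wrong way.

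The paper's remedy is to place the slack in the \emph{argument} of $\rho_{\varphi,\psi}$ rather than in its value: from the failure of domination one picks $x_n\to\infty$ with $\rho(x_n)<\rho_{\varphi,\psi}(x_n 2^{-n})$, and then the witness $t_n$ from the supremum gives
\[
\frac{\varphi(x_n t_n 2^{-n})}{t_n}\;\ge\;\rho(x_n)+\frac{\psi(t_n)}{t_n}.
\]
Now the bound on $\varphi$ is at the pre-shrunk point $x_n\tau_n 2^{1-n}$ (with $\tau_n=\lfloor t_n\rfloor$), and for any $\theta>0$ one has $\theta\ge 2^{1-n}$ eventually, so convexity gives $\varphi(\theta x_n\tau_n)\ge 2^{n-1}\theta\,\varphi(x_n\tau_n2^{1-n})$. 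With $p_n=C\,2^{-n}(\psi(\tau_n)/\tau_n+\rho(x_n))^{-1}$ the tail $\sum_{2^{n-1}\theta\ge 1}$ then telescopes to $+\infty$. That single relocation of the factor $2^{-n}$ is the whole trick; everything else in your outline is correct.
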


It turns out that if we assume something more about the functions
$\varphi$ and $\psi$, the above proposition can be considerably
strengthened.

\begin{thm}[Strong optimality of Theorem \ref{thm_nu}] \label{stron_opt_nu} Let $\varphi, \psi$ and $\rho$ be as in
Theorem \ref{thm_nu}. Assume additionally that $\varphi^{-1}\circ
\psi$ is equivalent to a Young function. Let $Y$ be a random
variable such that $\|Y\|_{\rho} = \infty$. Then there exists a
Harris ergodic Markov chain $(X_n)$ on some Polish space
$\mathcal{X}$, with stationary distribution $\pi$, a small set
$C$, a small measure $\nu$ and a function $f\colon \mathcal{X}\to
\R$, such that the distribution of $f$ under $\pi$ is equal to the
law of $Y$, $\|\tau\|_{\nu,\psi} < \infty$  and
$\|S(f)\|_{\nu,\varphi} = \infty$.
\end{thm}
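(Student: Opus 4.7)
\emph{Plan.} I would build an explicit renewal-type Markov chain realising the counterexample. Take $\mathcal{X} = \{*\}\cup\{(k,j): k\ge 1,\, 1\le j\le T_k\}$ with $\{*\}$ an atom playing the role of the small set (so $m=1$, $\delta=1$, $\nu = P(*,\cdot)$), with positive integers $(T_k)_{k\ge 1}$ and weights $(q_k)_{k\ge 1}$ to be chosen, and transitions: from $*$, jump to $(k,T_k)$ with probability $q_k$; from $(k,j)$ with $j>1$, move deterministically to $(k,j-1)$; from $(k,1)$, return to $*$. Define $f$ on $\mathcal{X}$ so that its $\pi$-law equals the law of $Y$, where $Y$ is discretised to take values $(y_k)_{k\ge 0}$ with probabilities $(p_k)_{k\ge 0}$ and a small atom at some value $y_0$ (the atom absorbs the mass that $\pi$ necessarily places on $\{*\}$; this routine discretisation does not destroy $\|Y\|_\rho=\infty$). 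Setting $f((k,j))=y_k$ and $f(*)=y_0$, matching forces $q_k$ proportional to $p_k/T_k$.

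A block of type $k$ starting from $\nu$ has $\tau=T_k$ and $S(f)=T_k y_k + y_0$. Applying Pitman's formula (\ref{Pitman_formula}) and the explicit block structure makes this reduction transparent: up to constants and a bounded correction from the single visit to $*$ (which convexity of $\varphi$ lets us ignore), $\|\tau\|_{\nu,\psi}$ and $\|S(f)\|_{\nu,\varphi}$ are governed by
\begin{displaymath}
\sum_{k\ge 1} \frac{p_k}{T_k}\,\psi\!\left(\tfrac{T_k}{C_0}\right) \qquad\text{and}\qquad \sum_{k\ge 1} \frac{p_k}{T_k}\,\varphi\!\left(\tfrac{y_k T_k}{C}\right),
\end{displaymath}
so that the task reduces to finding positive integers $T_k$ for which the first sum is finite at some $C_0>0$ and the second is divergent at every $C>0$.

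To choose $T_k$, let $t^*(y)$ denote a maximiser in $\rho(y)=\sup_t[\varphi(yt)-\psi(t)]/t$ (its existence and quantitative regularity follow from $\chi:=\varphi^{-1}\circ\psi$ being equivalent to a Young function, which forces $\rho$ finite and the sup attained at finite $t$). The correct choice is $T_k\approx t^*(y_k)/M_k$ for a slack factor $M_k\uparrow\infty$ to be determined. The optimality identity $\varphi(yt^*)/t^* = \rho(y)+\psi(t^*)/t^*$, combined with convexity of $\varphi$ and $\psi$ and the Young-function property of $\chi$, gives quantitative control over how $\psi(T_k)/T_k$ and $\varphi(y_k T_k)/T_k$ shrink when we replace $t^*(y_k)$ by $t^*(y_k)/M_k$: the former decays in $M_k$ strictly faster than the latter, the gap being the quantitative expression of the fact that $\psi$ grows faster than $\varphi$ via $\chi$. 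Since $\sum_k p_k\rho(y_k)=\infty$ (which follows from $\|Y\|_\rho=\infty$ by a dominated-convergence argument), one can then pick $M_k\uparrow\infty$ so that $\sum_k p_k\rho(y_k)/M_k^{a}$ becomes any prescribed summable sequence, for the exponent $a>0$ governing the response of $\psi(\cdot)/(\cdot)$ to shrinkage; this drives the $\psi$-sum to any chosen finite value while leaving the $\varphi$-sum (involving $M_k$ only to a strictly smaller power) divergent.

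The main obstacle I expect is ensuring divergence of the $\varphi$-sum at \emph{every} scale $C>0$, not only at $C=1$ where $t^*(y_k)$ is optimal. This is precisely where the hypothesis on $\chi$ earns its keep: it implies that the maximiser $t^*(y/C)$ differs from $t^*(y)$ only by a bounded multiplicative factor uniformly in $y$ (for each fixed $C>0$), so the scale-$1$ estimate transfers uniformly to any $C>0$ at the price of a multiplicative constant depending on $C$ alone. Once this uniform-in-$C$ estimate is in hand, the constructed chain is manifestly Harris ergodic (the atom $\{*\}$ is visited in every cycle, yielding positive recurrence), $f$ has the prescribed $\pi$-distribution by construction, $\|\tau\|_{\nu,\psi}<\infty$ by the bound on the $\psi$-sum, and $\|S(f)\|_{\nu,\varphi}=\infty$ by the divergence of the $\varphi$-sum, completing the counterexample.
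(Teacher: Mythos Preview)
Your renewal-chain is close to the paper's ``main construction'' (Section~\ref{section_counterexample}), but one detail matters for the statement as written: by adding an explicit one-point atom $\{*\}$ carrying positive $\pi$-mass you are forced to discretise $Y$ and park the mass $\pi(\{*\})$ at a special value $y_0$. The theorem, however, asks for $f$ whose $\pi$-law is \emph{exactly} the law of the given $Y$; if $Y$ is atomless your chain cannot deliver this. The paper avoids the problem by taking the atom to be the set $C=\{(x,h(x)):x\in\mathcal S\}$ rather than an external point, so $\mathcal S$ carries the law of $Y$ directly and no discretisation is needed.

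The deeper difference is in how the block lengths are chosen, and here your sketch has a genuine gap. The paper never touches the maximiser $t^*(y)$; it argues by Orlicz duality. The hypothesis that $\varphi^{-1}\circ\psi$ is equivalent to a Young function is used (via Lemma~\ref{asymptotic_convexity}, Proposition~\ref{legendre_prop} and Lemma~\ref{lemma_inverses}) only to identify $\rho^\ast\simeq\varphi^\ast/(\psi^{-1}\circ\varphi^\ast)$. Since $\|Y\|_\rho=\infty$, Lemma~\ref{general_duality} then produces $g\ge 0$ with $\int\varphi^\ast(g)/\psi^{-1}(\varphi^\ast(g))\,d\alpha<\infty$ but $\int|\tilde f|\,g\,d\alpha=\infty$. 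Setting $h=\lfloor\psi^{-1}(\varphi^\ast(g))\rfloor+1$, one finds that under $\nu\propto h^{-1}\alpha$ both $\E_\nu\psi(h/2)$ and $\E_\nu\varphi^\ast(g)$ reduce to the same finite integral (so $\|\tau\|_{\nu,\psi}<\infty$ and $g\in L_{\varphi^\ast}(\nu)$), while $\E_\nu|\tilde f hg|\propto\int|\tilde f|g\,d\alpha=\infty$, and a second use of duality gives $\|S(f)\|_{\nu,\varphi}=\|\tilde f h\|_{\nu,\varphi}=\infty$. Your ``exponent $a$ versus strictly smaller power'' heuristic, by contrast, presupposes that $\psi(t/M)/(t/M)$ and $\varphi(yt/M)/(t/M)$ respond to shrinkage by fixed powers of $M$, which fails for general Young functions: with $\varphi(x)=x^2$ and $\psi$ of exponential type, once $t^*(y)/M$ drops below order one both quantities scale like $1/M$ and no exponent gap survives. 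A suitable choice of $T_k$ may still exist, but not via the mechanism you describe; the duality route both localises the use of the convexity hypothesis to a single identification of $\rho^\ast$ and delivers divergence at every scale $C$ for free.
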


\paragraph{Remarks} 1. In the last section we will see that the above theorem for
$\varphi(x) = x^2$ can be used to construct examples of chains
violating the central limit theorem.

\noindent  2. We do not know if the additional assumption on convexity of
$\varphi^{-1}\circ \psi$ is needed in the above Theorem.

\noindent 3. In fact in the construction we provide the set $C$ is an atom for the chain (i.e. in the minorization condition $m=1$ and $\delta=1$).

\vskip0.7cm

The above results give a fairly complete answer to Question 1 for
a chain started from a small measure. We will now show that
Theorem \ref{thm_nu} can be also used to derive the answer to
Question 2.

Recall that the Legendre transform of a function $\rho\colon
[0,\infty)\to \R_+$ is defined as $\rho^\ast = \sup\{xy- \rho(y)
\colon y \ge 0\}$.
 Our answer to Question
2 is based on the following observation (which will also be used
in the proof of Theorem \ref{stron_opt_nu}).

\begin{prop}\label{legendre_prop}
For any Young functions $\varphi, \psi$ satisfying Assumption (A),
the function $\rho = \rho_{\varphi,\psi}$ is equivalent to
$\eta^\ast$, where $\eta = (\psi^\ast)^{-1}\circ \varphi^\ast$.
More precisely, for any $x \ge 0$,
\begin{equation}\label{legendre_ineq}
2\eta^\ast(2^{-1}x) \le \rho(x) \le 2^{-1}\eta^\ast(2x).
\end{equation}
\end{prop}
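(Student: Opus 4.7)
The proposition asserts a precise two-sided comparison between the generalized Young function $\rho=\rho_{\varphi,\psi}$ and $\eta^{\ast}$. My plan is to derive each of the two inequalities in (\ref{legendre_ineq}) by a single application of Young's inequality, exploiting the defining identity $\psi^{\ast}(\eta(t))=\varphi^{\ast}(t)$ (which is immediate from $\eta=(\psi^{\ast})^{-1}\circ\varphi^{\ast}$) together with the scaling rule $(\lambda h)^{\ast}(u)=\lambda h^{\ast}(u/\lambda)$ for Legendre transforms.

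For the upper bound $\rho(x)\le \tfrac{1}{2}\eta^{\ast}(2x)$, Young's inequality applied to the pair $(\psi,\psi^{\ast})$ gives, for every $y,t\ge 0$,
\[ y\eta(t)\le \psi(y)+\psi^{\ast}(\eta(t))=\psi(y)+\varphi^{\ast}(t). \]
Since $y\eta(t)\ge 0$, we may weaken $y\eta(t)$ to $y\eta(t)/2$ on the left, obtaining $-\varphi^{\ast}(t)\le \psi(y)-\tfrac{y}{2}\eta(t)$. Adding $ut$ and taking the supremum over $t\ge 0$,
\[ \varphi(u)=\sup_{t\ge 0}[ut-\varphi^{\ast}(t)]\le \psi(y)+\sup_{t\ge 0}\bigl[ut-\tfrac{y}{2}\eta(t)\bigr]=\psi(y)+\tfrac{y}{2}\eta^{\ast}(2u/y), \]
the last equality by the scaling rule. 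Setting $u=xy$, dividing by $y$ and taking the supremum over $y>0$ gives the upper half of (\ref{legendre_ineq}).

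For the lower bound $\rho(x)\ge 2\eta^{\ast}(x/2)$, it suffices, by taking the supremum over $t$, to show that $\rho(x)\ge xt-2\eta(t)$ for each $t\ge 0$. Fix $t$ and let $y^{\ast}>0$ realize the supremum in $\psi^{\ast}(\eta(t))=\sup_{y\ge 0}[y\eta(t)-\psi(y)]$, so that $\psi(y^{\ast})=y^{\ast}\eta(t)-\varphi^{\ast}(t)$. Young's inequality applied to $(\varphi,\varphi^{\ast})$ gives $\varphi(xy^{\ast})\ge xy^{\ast}t-\varphi^{\ast}(t)$, and therefore
\[ \varphi(xy^{\ast})-\psi(y^{\ast})\ge xy^{\ast}t-\varphi^{\ast}(t)-\psi(y^{\ast})=y^{\ast}(xt-\eta(t))\ge y^{\ast}(xt-2\eta(t)), \]
the first equality using the identity for $\psi(y^{\ast})$ and the last inequality $\eta(t)\ge 0$. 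Dividing by $y^{\ast}$ and invoking the definition of $\rho$ yields the desired estimate.

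The only point requiring care is the existence of a strictly positive maximizer $y^{\ast}$ in the Legendre transform defining $\psi^{\ast}(\eta(t))$. Assumption $(A)$ forces $\psi^{\prime}(0+)=0$, so whenever $\eta(t)>0$ the first-order condition $\eta(t)\in\partial\psi(y^{\ast})$ admits a solution $y^{\ast}>0$; the degenerate case $\eta(t)=0$ contributes only the trivial bound $\rho(x)\ge 0$ to the supremum over $t$, and if the maximum in $\psi^{\ast}$ is merely approached asymptotically, the argument goes through by replacing $y^{\ast}$ with an $\varepsilon$-maximizer and letting $\varepsilon\to 0$.
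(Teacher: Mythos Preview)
Your proof is correct and takes a genuinely different route from the paper's. The paper first establishes the \emph{exact} identity $\rho=\tilde\eta^{\ast}$, where $\tilde\eta(z)=\inf_{y>0}(\varphi^{\ast}(z)+\psi(y))/y$, by writing $\varphi=\varphi^{\ast\ast}$ and swapping the order of the two suprema. It then compares $\tilde\eta$ with $\eta$: balancing the decreasing term $\varphi^{\ast}(z)/y$ against the increasing term $\psi(y)/y$ at the crossing point $y_0=\psi^{-1}(\varphi^{\ast}(z))$ gives $\varphi^{\ast}(z)/y_0\le\tilde\eta(z)\le 2\varphi^{\ast}(z)/y_0$, and the appendix lemma $x\le(\psi^{\ast})^{-1}(x)\psi^{-1}(x)\le 2x$ converts this to $\tfrac12\eta\le\tilde\eta\le 2\eta$, which dualizes to (\ref{legendre_ineq}).

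Your argument bypasses both the exact representation and the lemma on inverses, obtaining each inequality by a single application of Young's inequality together with the scaling rule for Legendre transforms. This is more self-contained, and in fact your lower-bound computation yields the sharper estimate $\rho(x)\ge\eta^{\ast}(x)$ (before you weaken it to $xt-2\eta(t)$; that last step is unnecessary, since $\eta^{\ast}(x)\ge 2\eta^{\ast}(x/2)$ by convexity and $\eta^{\ast}(0)=0$). What the paper's approach buys is the explicit formula $\rho=\tilde\eta^{\ast}$, which has some independent structural interest. Your handling of the possible non-existence of a maximizer $y^{\ast}$ via $\varepsilon$-maximizers is fine; note that since $\psi'(0+)=0$ under Assumption~(A), the near-maximizers stay bounded away from $0$ whenever $\eta(t)>0$, so the error term $\varepsilon/y_\varepsilon$ indeed vanishes.
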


Before we prove the proposition let us derive the immediate
corollary, whose optimality will also be shown in the next
section.

\begin{cor}\label{question_2_cor_nu}  Let $\rho$ and $\psi$ be two Young functions. Assume that $\psi$ satisfies the assumption (A). Then for any Harris ergodic Markov chain, small set $C$,
small measure $\nu$ and any $f\colon \mathcal{X} \to \R$ we have
\begin{displaymath}
\|S\|_{\nu,\tilde{\varphi}} \le
4m\|(\tau+1)\|_{\nu,\psi}\|f\|_{\pi,\rho},
\end{displaymath}
where $\tilde{\varphi} = (\psi^\ast \circ\rho^\ast)^\ast$.
\end{cor}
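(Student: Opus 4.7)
The plan is to derive Corollary \ref{question_2_cor_nu} as a direct consequence of Theorem \ref{thm_nu} applied with $\varphi = \tilde{\varphi}$, combined with the Legendre-transform identification from Proposition \ref{legendre_prop}. The main task is to verify that the generalized Young function $\rho_{\tilde\varphi,\psi}$ produced by the theorem is essentially the original function $\rho$, up to a universal constant that accounts for the extra factor $2$ in the final bound.

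First I would apply Theorem \ref{thm_nu} with $\varphi$ replaced by $\tilde\varphi = (\psi^\ast \circ \rho^\ast)^\ast$, which yields
\begin{displaymath}
\|S\|_{\nu,\tilde\varphi} \le 2m\|\tau+1\|_{\nu,\psi}\|f\|_{\pi,\rho_{\tilde\varphi,\psi}}.
\end{displaymath}
Next I would invoke Proposition \ref{legendre_prop}, according to which $\rho_{\tilde\varphi,\psi}$ is sandwiched between dilations of $\eta^\ast$ where $\eta = (\psi^\ast)^{-1}\circ \tilde\varphi^\ast$. The key computation is then $\tilde\varphi^\ast = \psi^\ast \circ \rho^\ast$. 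This follows because $\psi^\ast$ is convex nondecreasing and $\rho^\ast$ is convex, so their composition is convex (and vanishes at $0$, is lower semicontinuous), and the biconjugate of a convex lsc function with value $0$ at $0$ is itself. Consequently
\begin{displaymath}
\eta = (\psi^\ast)^{-1}\circ \psi^\ast \circ \rho^\ast = \rho^\ast,
\end{displaymath}
so $\eta^\ast = \rho^{\ast\ast} = \rho$. Inequality \eqref{legendre_ineq} from Proposition \ref{legendre_prop} then gives
\begin{displaymath}
\rho_{\tilde\varphi,\psi}(x) \le 2^{-1}\rho(2x)\le \rho(2x)
\end{displaymath}
for all $x \ge 0$, from which one immediately reads off $\|f\|_{\pi,\rho_{\tilde\varphi,\psi}} \le 2\|f\|_{\pi,\rho}$ by a rescaling in the definition of the Orlicz norm. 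Combining this with the bound from Theorem \ref{thm_nu} produces exactly the factor $4m$ announced in the corollary.

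The place that needs a little care is the identification $\tilde\varphi^\ast = \psi^\ast\circ\rho^\ast$: one must check that $\psi^\ast\circ\rho^\ast$ is already convex, nondecreasing, lower semicontinuous and equal to $0$ at the origin, so that no gap opens under double-conjugation. Once these properties are verified (they follow from the standard facts about Legendre transforms of Young functions gathered in the appendix), the rest of the argument is purely mechanical. A minor additional point is that $\tilde\varphi$ may a priori be only a generalized Young function rather than a strict one, but this does not affect the applicability of Theorem \ref{thm_nu}, whose proof only uses Jensen's inequality, Pitman's formula and assumption (A) for $\psi$, none of which depend on strict monotonicity of the outer Young function.
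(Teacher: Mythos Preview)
Your proposal is correct and follows exactly the approach the paper intends: the corollary is stated immediately after Proposition~\ref{legendre_prop} as a direct consequence of Theorem~\ref{thm_nu} combined with the identification $\rho_{\tilde\varphi,\psi}(x)\le 2^{-1}\rho(2x)$ coming from \eqref{legendre_ineq} via $\tilde\varphi^\ast=\psi^\ast\circ\rho^\ast$ and $\eta^\ast=\rho$. Your treatment of the technical points (convexity of $\psi^\ast\circ\rho^\ast$ so that the biconjugate returns it unchanged, and the fact that Theorem~\ref{thm_nu} does not require $\varphi$ to be strictly increasing) is also appropriate.
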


\begin{proof}[Proof of Proposition \ref{legendre_prop}]
Using the fact that $\varphi^{\ast\ast} = \varphi$ we get
\begin{align*}
\rho(x) &= \sup_{y \ge 0} \frac{\varphi(xy) - \psi(y)}{y} =
\sup_{y\ge 0}\sup_{z\ge 0} \frac{xyz - \varphi^\ast(z) -
\psi(y)}{y}\\
&= \sup_{z\ge 0} \Big(xz - \inf_{y \ge 0} \frac{\varphi^\ast(z) +
\psi(y)}{y}\Big) =\tilde{\eta}^\ast(x),
\end{align*}
where $\tilde{\eta}(x) = \inf_{y \ge 0} (\varphi^\ast(z) +
\psi(y))y^{-1}$. Note that as a function of $y$,
$\varphi^\ast(z)y^{-1}$ decreases whereas $\psi(y)y^{-1}$
increases, so for all $z \ge 0$ we have
\begin{displaymath}
\frac{\varphi^\ast(z)}{y_0}\le \eta(z) \le
2\frac{\varphi^\ast(z)}{y_0},
\end{displaymath}
where $y_0$ is defined by the equation $\varphi^\ast(z) =
\psi(y_0)$, i.e. $y_0 = \psi^{-1}(\varphi(z))$. In combination with
Lemma \ref{lemma_inverses} from the Appendix, this yields
\begin{displaymath}
\frac{1}{2}\eta(z) \le \tilde{\eta}(z) \le 2\eta(z),
\end{displaymath}
which easily implies that $2\eta^\ast(x/2) \le \rho(x) \le
2^{-1}\eta^\ast(2x)$ and thus ends the proof.
\end{proof}

We also have the following Proposition whose prove is deferred to Section \ref{Section_proofs}.

\begin{prop}\label{Question_2_opt_nu} Let $\psi$ and $\rho$ be as in Corollary
\ref{question_2_cor_nu} and let $\varphi$ be a Young function such
that for every Markov chain $(X_n)$, small set $C$, small measure
$\nu$  and $f\colon \mathcal{X} \to \R$ with $\|\tau\|_{\nu,\psi}
< \infty$ and $\|f\|_{\pi,\rho} < \infty$ we have
$\|S\|_{\nu,\varphi} < \infty$. Then $\varphi \preceq
(\psi^\ast\circ\rho^\ast)^\ast$.
\end{prop}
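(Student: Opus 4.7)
The strategy is to dualize the hypothesis twice: first translate it into an inequality between $\rho_{\varphi,\psi}$ and $\rho$ via Proposition~\ref{weak_opt_nu}, then pass to the Legendre-conjugate side via Proposition~\ref{legendre_prop}, where the target inequality becomes transparent.

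The assumption on $\varphi$ is exactly the hypothesis of Proposition~\ref{weak_opt_nu}, so that proposition yields $\rho_{\varphi,\psi}\preceq \rho$. Combining with the equivalence $\rho_{\varphi,\psi}\simeq \eta^\ast$ from Proposition~\ref{legendre_prop}, where $\eta=(\psi^\ast)^{-1}\circ\varphi^\ast$, we obtain
\begin{displaymath}
\eta^\ast \preceq \rho.
\end{displaymath}
Taking Legendre transforms reverses $\preceq$ between (generalized) Young functions — a standard Orlicz-duality fact, which I would borrow from the appendix — hence $\rho^\ast \preceq \eta^{\ast\ast}$; the pointwise bound $\eta^{\ast\ast}\le \eta$ on the biconjugate then gives $\rho^\ast \preceq \eta$. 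Unwinding the definition of $\preceq$ produces constants $C_1,C_2>0$ and $x_0\ge 0$ (with $C_1\ge 1$, after possibly enlarging) such that
\begin{displaymath}
\rho^\ast(x)\le C_1\,(\psi^\ast)^{-1}\bigl(\varphi^\ast(C_2 x)\bigr),\qquad x\ge x_0.
\end{displaymath}

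The outer factor $C_1$ is absorbed into the argument by a sub-scaling trick. Substituting $x\mapsto C_1 x$ in the previous display and invoking the bound $\rho^\ast(C_1 x)\ge C_1\rho^\ast(x)$ (which is valid for any convex function vanishing at $0$ whenever $C_1\ge 1$) gives, after dividing through by $C_1$,
\begin{displaymath}
\rho^\ast(x)\le (\psi^\ast)^{-1}\bigl(\varphi^\ast(C_1 C_2 x)\bigr)
\end{displaymath}
for all sufficiently large $x$. Applying the increasing function $\psi^\ast$ to both sides yields $\psi^\ast\circ\rho^\ast \preceq \varphi^\ast$. A last Legendre transform produces
\begin{displaymath}
\varphi=\varphi^{\ast\ast}\preceq (\psi^\ast\circ\rho^\ast)^\ast,
\end{displaymath}
which is the desired conclusion.

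The only genuinely delicate points are the two invocations of Legendre duality: each requires the standard fact that, for Young functions (or functions equivalent to them), the relation $\preceq$ is reversed by passage to conjugates. A secondary nuisance is that the intermediate function $\eta=(\psi^\ast)^{-1}\circ\varphi^\ast$ need not itself be convex, which is why the reduction proceeds through $\eta^{\ast\ast}$ and the elementary inequality $\eta^{\ast\ast}\le \eta$ rather than through $\eta$ directly. All of these facts are routine and should already be encapsulated in the appendix on Orlicz spaces.
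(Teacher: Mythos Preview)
Your proof is correct and follows the same route as the paper's: invoke Proposition~\ref{weak_opt_nu} to get $\rho_{\varphi,\psi}\preceq\rho$, translate via Proposition~\ref{legendre_prop} into $\eta^\ast\preceq\rho$, dualize and use $\eta^{\ast\ast}\le\eta$ to obtain $\rho^\ast\preceq\eta$, then compose with $\psi^\ast$ and dualize once more. The paper compresses the middle implications into a single chain, whereas you spell out the constant-absorption step explicitly via convexity of $\rho^\ast$; this is a genuine detail the paper glosses over (its remark after the definition of $\preceq$ assumes the \emph{dominating} function is Young, which $\eta$ need not be), so your argument is in fact slightly more careful on this point.
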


\paragraph{Examples}

Let us now take a closer look at consequences of our theorems for
classical Young functions. The following examples are
straightforward and rely only on theorems presented in the last
two sections and elementary formulas for Legendre transforms of
classical Young functions. The formulas we present here will be
used in Section \ref{Section_applications}. We also note that below we consider functions of the form $x\mapsto \exp(x^\alpha)-1$ for $\alpha \in(0,1)$. Formally such functions are not Young functions but it is easy to see that they can be modified for small values of $x$ in such a way that they become Young functions. It is customary to define $\|X\|_{\psi_\alpha} = \inf\{C> 0 \colon \E\exp((|X|/C)^\alpha) \le 2\}$. Under such definition $\|\cdot\|_{\psi_\alpha}$ is a quasi-norm, which can be shown to be equivalent to the Orlicz norm corresponding to the modified function.

\begin{enumerate}
\item If $\varphi(x) =x^p$ and $\psi(x)= x^r$, where $r > p\ge 1$
then $\rho_{\varphi,\psi}(x) \simeq x^{\frac{p(r-1)}{r-p}}$.

\item If $\varphi(x) = \exp(x^\alpha) - 1$ and $\psi(x) =
\exp(x^\beta)-1$, where $\beta \ge \alpha$ then
$\rho_{\varphi,\psi}(x) \simeq
\exp(x^{\frac{\alpha\beta}{\beta-\alpha}})-1$.

\item If $\varphi(x) = x^p$ and $\psi(x) = \exp(x^\beta)-1$, where
$\beta >0$ then $\rho_{\varphi,\psi}(x) \simeq
x^p\log^{(p-1)/\beta}x$.

\item If $\psi(x) = x^r$ and $\rho(x) = x^p$ then $\varphi(x)
\simeq x^{\frac{rp}{r+p-1}}$.

\item If $\psi(x) = \exp(x^\beta)-1$ and $\rho(x) =
\exp(x^\alpha)-1$ ($\alpha,\beta > 0$), then $\varphi(x) \simeq
\exp(x^{\frac{\alpha\beta}{\alpha+\beta}})-1$.

\item If $\psi(x) = \exp(x^\beta)-1$ ($\beta>0$) and $\rho(x) =
 x^p$ ($p\ge1$), then $\varphi(x) \simeq \frac{x^p}{\log^{(p-1)/\beta} x}$.
\end{enumerate}

\subsection{The stationary case}
We will now present answers to questions 1 and 2 in the stationary case. Let us start with the following

\begin{defi} Let $\varphi$ and $\psi$ be Young functions. Assume
that $\lim_{x\to 0}\psi(x)/x = 0$ and define the generalized Young
function $\zeta = \zeta_{\varphi,\psi}$ by the formula
\begin{displaymath}
\zeta(x) = \sup_{y\ge 0} (\varphi(xy) - y^{-1}\psi(y)).
\end{displaymath}
\end{defi}

The function $\zeta$ will play in the stationary case a role analogous to the one of function $\rho$ for the chain started from the small measure.

\begin{thm}\label{thm_pi}Let $\varphi$ and $\psi$ be Young functions, $\lim_{x\to0}\psi(x)/x = 0$.
Let $\zeta = \zeta_{\varphi,\zeta}$. Then for any Harris ergodic
Markov chain $(X_n)$, small set $C$ and small measure $\nu$ we
have
\begin{align}
\Big\|\sum_{j=0}^{m\tau +m-1} f(X_j)\Big\|_{\pi,\varphi} \le
m\|\tau+1\|_{\nu,\psi}\Big(1+
\delta\pi(C)\|\tau+1\|_{\nu,\psi}\Big)\|f\|_{\pi, \zeta}.
\end{align}
\end{thm}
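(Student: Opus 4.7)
The plan is to mimic the proof of Theorem~\ref{thm_nu}, with two key modifications: transfer the computation from the $\pi$-chain to the $\nu$-chain via Pitman's occupation formula~(\ref{Pitman_formula}), and replace the inequality $\varphi(xy)\le y\rho(x)+\psi(y)$ used in that proof with its $\zeta$-analogue $\varphi(xy)\le \zeta(x)+\psi(y)/y$, so that the $\zeta$-term is not weighted by $(\tau+1)$ (at the price of dividing the $\psi$-term by $(\tau+1)$). Writing $a=\|\tau+1\|_{\nu,\psi}$, $b=\|f\|_{\pi,\zeta}$, and $c=1+\delta\pi(C)a$, the goal is to prove $\E_\pi \varphi(|S|/(mabc))\le 1$.

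The first step is to extend~(\ref{Pitman_formula}) to functionals depending on the future trajectory, which is legitimate since by conditioning on $(X_{mi},Y_i)$ and using the strong Markov property any such functional is, in expectation, a function of $(X_{mi},Y_i)$ alone. This yields
\begin{displaymath}
\E_\pi\varphi\!\left(\frac{S}{mabc}\right)=\delta\pi(C)\E_\nu\sum_{i=0}^{\tau}\varphi\!\left(\frac{U_i}{mabc}\right),\qquad U_i=\sum_{j=mi}^{m\tau+m-1}f(X_j).
\end{displaymath}
Since $|U_i|\le |S|_\ast:=\sum_{j=0}^{m(\tau+1)-1}|f(X_j)|$, the inner sum is bounded by $(\tau+1)\varphi(|S|_\ast/(mabc))$. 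Next, I would apply Jensen with uniform weights $1/(m(\tau+1))$ to $\varphi(|S|_\ast/(mabc))$, then invoke the defining inequality of $\zeta$ with $x=|f(X_j)|/b$, $y=(\tau+1)/a$, together with the elementary $\varphi(z/c)\le \varphi(z)/c$ for $c\ge 1$ (a consequence of convexity and $\varphi(0)=0$). This produces
\begin{displaymath}
(\tau+1)\varphi(|S|_\ast/(mabc))\le \frac{1}{mc}\sum_{j=0}^{m(\tau+1)-1}\zeta(|f(X_j)|/b)+\frac{a\psi((\tau+1)/a)}{c}.
\end{displaymath}

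Finally, I would take $\E_\nu$ and use~(\ref{mean}): the first $\nu$-expectation equals $m\delta^{-1}\pi(C)^{-1}\int\zeta(|f|/b)\,d\pi\le m\delta^{-1}\pi(C)^{-1}$ by the definition of $b$, while $\E_\nu\psi((\tau+1)/a)\le 1$ by the definition of $a$. Feeding these back into the Pitman identity gives
\begin{displaymath}
\E_\pi\varphi(S/(mabc))\le \delta\pi(C)\!\left(\frac{\delta^{-1}\pi(C)^{-1}}{c}+\frac{a}{c}\right)=\frac{1+\delta\pi(C)a}{c}=1,
\end{displaymath}
which delivers the claimed bound $\|S\|_{\pi,\varphi}\le mabc$. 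The one non-routine step is the extension of Pitman's formula to future-dependent functionals; everything else is bookkeeping with convexity and the defining inequality of $\zeta$, arranged so that the $\zeta$-contribution $1/c$ and the $\psi$-contribution $\delta\pi(C)a/c$ add up exactly to $1$, which is the reason for the specific choice of $c=1+\delta\pi(C)a$.
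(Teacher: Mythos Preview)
Your proof is correct and follows essentially the same path as the paper's. The only differences are cosmetic: the paper normalizes by $mab$ and bounds $\E_\pi\varphi(|S|/(mab))$ by $1+\delta\pi(C)a$ before concluding via convexity, while you build the factor $c=1+\delta\pi(C)a$ into the normalization from the start; and the paper applies Jensen to each tail-sum $U_i$ separately and then swaps the order of summation, whereas your uniform bound $|U_i|\le |S|_\ast$ followed by a single Jensen step reaches the same intermediate expression $\sum_j\varphi((\tau+1)|f(X_j)|/(ab))$ with slightly less bookkeeping.
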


\begin{proof} The proof is very similar to the proof of Theorem
\ref{thm_nu}, however it involves one more use of Pitman's formula
to pass from the stationary case to the case of the chain started
from $\nu$.

Consider any functional $F\colon \mathcal{X}^\N \times \{0,1\}^\N
\to\R$ (measurable wrt the product $\sigma$-field) on the space of
the trajectories of the process $(X_n,Y_n)_{n\ge 0}$ (recall from
the introduction that we identify $X_n$ and $\tilde{X}_n$). By the
definition of the split chain we have for any $i\in \N$,
\begin{displaymath}
\E (F((X_j)_{j\ge im},(Y_j)_{j\ge
i})|\mathcal{F}_{im}^X,\mathcal{F}_{i}^Y) = G(X_{im},Y_i),
\end{displaymath}
where $G(x,y) = \E_{(x,y)} F((X_i)_{i\ge 0}, (Y_i)_{i\ge 0}) = \E
F((X_i)_{i\ge 0}, (Y_i)_{i\ge 0}|X_0=x,Y_0=y)$. In particular for
the functional
\begin{displaymath}
F((X_i)_{i\ge 0},(Y_i)_{i\ge 0}) =
\varphi((abm)^{-1}\sum_{i=0}^{m\tau+m-1}f(X_i)),
\end{displaymath}
where $a  = \|\tau+1\|_{\nu,\psi}$ and $b = \|f\|_{\pi,\zeta}$, we
have
\begin{align*}
&\E_\pi \varphi((abm)^{-1}\sum_{i=0}^{m\tau+m-1}f(X_{i})) =
\E_\pi G(X_0,Y_0) = \delta \pi(C) \E_\nu \sum_{i=0}^{\tau} G(X_{im},Y_i) \\
 & = \delta \pi(C)\sum_{i=0}^\infty\E_\nu
 G(X_{im},Y_i)\ind{i\le \tau}
= \delta \pi(C)\sum_{i=0}^\infty \E_\nu \E\Big(F((X_j)_{j\ge
im},(Y_j)_{j\ge
i})|\mathcal{F}^X_{im},\mathcal{F}_{i}^Y\Big)\ind{i\le
\tau}\\
&=\delta \pi(C)\sum_{i=0}^\infty \E_\nu
\varphi\Big((abm)^{-1}\sum_{j=im}^{m\tau+m-1} f(X_j)\Big)\ind{i\le
\tau}
= \delta \pi(C)\E_\nu \sum_{i=0}^{\tau}
\varphi\Big((abm)^{-1}\sum_{j=im}^{m\tau+m-1} f(X_j)\Big)\\
&\le \delta\pi(C)\E_\nu \sum_{i=0}^\tau
\sum_{j=im}^{m\tau+m-1}\frac{1}{m(\tau-i+1)}
\varphi((ab)^{-1}(\tau - i+1)f(X_j)) \\
& = \delta\pi(C)\E_\nu \sum_{j=0}^{m\tau+m-1} \sum_{i=0}^{\lfloor
j/m\rfloor} \frac{1}{m(\tau-i+1)} \varphi((ab)^{-1}(\tau -
i+1)f(X_j))\\
&\le \delta\pi(C)\E_\nu \sum_{j=0}^{m\tau+m-1} \frac{\lfloor
jm^{-1}\rfloor +1}{m(\tau+1)}\varphi((ab)^{-1}(\tau+1)f(X_j)),
\end{align*}
where the second equality follows from (\ref{Pitman_formula}) and
the two last inequalities from the convexity of $\varphi$.

We thus obtain
\begin{align*}
\E_\pi \varphi((abm)^{-1}S(f)) &\le
\delta\pi(C)m^{-1}\E_\nu\sum_{i=0}^{m\tau+m-1}\varphi((ab)^{-1}(\tau+1)
f(X_i))\\
&\le
\delta\pi(C)m^{-1}\E_\nu\sum_{i=0}^{m\tau+m-1}\zeta(b^{-1}f(X_i))
+
\delta\pi(C)a\E_\nu \psi(a^{-1}(\tau+1)) \\
& \le  \E_\pi \zeta(b^{-1}f(X_0)) + \delta\pi(C)a\E_\nu
\psi(a^{-1}(\tau+1)) \le 1+\delta\pi(C)a,
\end{align*}
which ends the proof.
\end{proof}

\paragraph{Remark} The dependence of the estimates presented in the above theorem on
 $\|\tau+1\|_{\nu,\psi}$ cannot be improved in the case of general
 Orlicz functions, since for $\varphi(x) = x$, $\psi(x) = x^2$,
and $f \equiv 1$ we have $\|S(f)\|_{\pi,\varphi} =
\E_{\pi}(\tau+1) \simeq \E_\nu (\tau+1)^2 =
\|\tau+1\|_{\nu,\psi}^2$. However under additional assumptions on
the growth of $\varphi$ one can obtain a better estimate and
replace the factor $1 + \delta\pi(C)\|\tau+1\|_{\nu,\psi}$
by $g(1+\delta\pi(C)\|\tau+1\|_{\nu,\psi})$, where $g(r) =
\sup_{x > 0} x/\varphi^{-1}(\varphi(x)/r)$. For rapidly growing
$\varphi$ and large $\|\tau+1\|_{\nu,\psi}$ this may be an
important improvement. It is also elementary to check that for
$\phi(x) = \exp(x^\alpha) - 1$, we can use $g(r) \simeq
\log^{1/\alpha}(r)$.

\paragraph{} Just as in the case of Theorem \ref{thm_nu}, the estimates
given in Theorem \ref{thm_pi} are optimal. Below we state the
corresponding optimality results, deferring their proofs to
Section \ref{Section_proofs}.

\begin{prop}[Weak optimality of Theorem
\ref{thm_pi}]\label{weak_opt_pi} Let $\varphi$ and $\psi$ be as in
Theorem \ref{thm_pi}. Assume that a Young function $\zeta$ has the
property that for every Harris ergodic chain $(X_n)$, small set
$C$ and small measure $\nu$ with $\|\tau\|_{\nu,\psi} < \infty$
and every function $f\colon \mathcal{X} \to \R$ such that
$\|f\|_{\pi,\zeta}< \infty$, we have $\|S(f)\|_{\pi,\varphi} <
\infty$. Then $\zeta_{\varphi,\psi} \preceq \zeta$.
\end{prop}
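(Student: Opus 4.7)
The strategy mirrors the proof of Proposition~\ref{weak_opt_nu}, with one structural modification: in the stationary case the excursion straddling time $0$ is selected with length--biased probability, which accounts for the extra factor $y^{-1}$ distinguishing $\zeta_{\varphi,\psi}$ from $\rho_{\varphi,\psi}$.

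Argue by contradiction. If $\zeta_{\varphi,\psi}\not\preceq\zeta$, extract a sparse subsequence $x_n\to\infty$ satisfying $\zeta_{\varphi,\psi}(x_n)\ge a_n\zeta(b_n x_n)$ for any prescribed sequences $a_n,b_n\to\infty$, and pick $y_n\ge 1$ nearly attaining the defining supremum:
\[
\varphi(x_n y_n)-y_n^{-1}\psi(y_n)\ge \tfrac{1}{2}\zeta_{\varphi,\psi}(x_n).
\]

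Build the counterexample as a countable--state atomic chain on
\[
\mathcal{X}=\{0\}\cup\bigcup_{n\ge 1}\{(n,0),\dots,(n,\ell_n-1)\},\qquad \ell_n=\lceil y_n\rceil,
\]
by declaring $\{0\}$ a proper atom: from $0$ jump to $(n,0)$ with probability $p_n$, and from $(n,i)$ move deterministically to $(n,i+1)$ (with $(n,\ell_n-1)\to 0$). Take $C=\{0\}$, $m=1$, $\delta=1$, $\nu=\sum_n p_n\delta_{(n,0)}$, and set $f(0)=0$, $f((n,i))=x_n$. A direct calculation shows that under $\nu$, $\tau+1=\ell_n$ with probability $p_n$, while under $\pi$, $\pi(\{(n,i)\})=p_n/Z$ with $Z=1+\sum_m p_m\ell_m$, and $S(f)=(\ell_n-i)x_n$ on the event $\{X_0=(n,i)\}$.

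Now tune $p_n$ to achieve simultaneously: (a)~$\sum_n p_n\psi(y_n)<\infty$, controlling $\|\tau\|_{\nu,\psi}$; (b)~$\sum_n p_n y_n\zeta(x_n)<\infty$, controlling $\|f\|_{\pi,\zeta}$, with the factor $y_n$ coming from length--biasing; (c)~$\sup_n p_n\varphi(\ell_n x_n/n)\ge 3Z$, which via the bound $\E_\pi\varphi(S(f)/c)\ge p_n\varphi(\ell_n x_n/c)/Z$ together with the monotonicity of $\varphi$ forces $\|S(f)\|_{\pi,\varphi}=\infty$. A diagonal choice $p_n\asymp 1/\varphi(\ell_n x_n/n)$ gives (c); the lower bound from Step~1 combined with the sparsification $\zeta(x_n)\le \zeta_{\varphi,\psi}(x_n)/a_n\le 2\varphi(x_n y_n)/a_n$ then forces (a) and (b) once $(x_n)$ is thinned aggressively enough.

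The main obstacle is this simultaneous balancing: one must pass to nested subsequences controlling the comparative rates of growth of $\varphi,\psi,\zeta$ at the pairs $(x_n,y_n)$, since the location of the supremum $y_n$ in $\zeta_{\varphi,\psi}(x_n)$ is not a priori tied to the growth of $\zeta$. The bookkeeping is otherwise identical to that in the proof of Proposition~\ref{weak_opt_nu}, and the factor $y_n$ in (b) is matched precisely by the factor $y^{-1}$ inside $\zeta_{\varphi,\psi}$, which is what makes the construction sharp.
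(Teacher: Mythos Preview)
Your overall strategy and construction match the paper's: argue by contradiction, build an atomic tower chain (essentially the ``main counterexample'' of Section~\ref{section_counterexample} with the atom collapsed to a single state), and balance three conditions; your identification of the length-biasing factor $y_n$ in condition (b) is also correct. The gap is in the balancing step. With $y_n$ a near-maximizer of $\zeta_{\varphi,\psi}(x_n)$ and $p_n\asymp 1/\varphi(\ell_n x_n/n)$, condition (a) requires $\sum_n \psi(y_n)/\varphi(y_n x_n/n)<\infty$; but the only information the near-supremum gives is $\psi(y_n)/y_n\le\varphi(x_n y_n)$, and the ratio $\varphi(x_n y_n)/\varphi(x_n y_n/n)$ is completely uncontrolled. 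Concretely, for $\varphi(x)=e^x-1$ and $\psi(x)=e^{x^2}-1$ one has $y_n\asymp x_n$, so $p_n\psi(y_n)\asymp\exp\bigl(x_n^2(1-1/n)\bigr)\to\infty$ for every $n\ge 2$; passing to a subsequence and relabelling by $k$ still gives $\exp\bigl((x'_k)^2(1-1/k)\bigr)\to\infty$, so no thinning rescues (a). Your parameter $a_n$ helps only with (b), and $b_n$ is never actually used.

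The paper's device --- which is exactly what makes the proof of Proposition~\ref{weak_opt_nu} work, and what your $b_n$ would supply if you set $f((n,i))=b_n x_n$ rather than $x_n$ --- is to place the geometric factor \emph{inside} the argument of $\zeta_{\varphi,\psi}$ when extracting the sequence: choose $x_n\to\infty$ with $\zeta(x_n)<\zeta_{\varphi,\psi}(x_n 2^{-n})$, so the witness $t_n$ already satisfies $\varphi(x_n t_n 2^{-n})>\zeta(x_n)+\psi(t_n)/t_n$. Then, in the $\alpha$-parametrization of Section~\ref{section_counterexample}, the choice $p_n=C\,2^{-n}/(\zeta(x_n)+\psi(t_n)/t_n)$ makes (a) and (b) trivial, while for (c) convexity pulls $2^{n-1}\theta$ \emph{out} of $\varphi(\theta x_n t_n/2)=\varphi\bigl(2^{n-1}\theta\cdot x_n t_n 2^{-n}\bigr)$, and property~(vii) gives the divergent lower bound on $\E_\pi\varphi(\theta|S(f)|)$.
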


\begin{thm}[Strong optimality of Theorem \ref{thm_pi}] \label{stron_opt_pi} Let $\varphi, \psi$ and $\zeta$ be as in
Theorem \ref{thm_pi}. Let $\tilde{\psi}(x) = \psi(x)/x$ and assume
additionally that the function $\eta = \varphi^{-1}\circ
\tilde{\psi}$ is equivalent to a Young function. Let $Y$ be a
random variable such that $\|Y\|_{\zeta} = \infty$. Then there
exists a Harris ergodic Markov chain $(X_n)$ on some Polish space
$\mathcal{X}$ with stationary distribution $\pi$, small set $C$,
small measure $\nu$ and a function $f\colon \mathcal{X}\to \R$,
such that the distribution of $f$ under $\pi$ is equal to the law
of $Y$, $\|\tau\|_{\nu,\varphi} < \infty$ and
$\|S(f)\|_{\pi,\varphi} = \infty$.
\end{thm}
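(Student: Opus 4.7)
My plan is to parallel the proof of Theorem \ref{stron_opt_nu} but adapt it to the stationary duality $\zeta(x) = \sup_L (\varphi(Lx) - \tilde{\psi}(L))$, where $\tilde{\psi}(L) = \psi(L)/L$. I would work on the countable space $\mathcal{X} = \{0\} \cup \{(n,k) : n \in \N,\, 1 \le k \le L_n\}$ with a single atom at $0$ and deterministic excursions: from $0$ jump to $(n,1)$ with probability $p_n$, then move deterministically through $(n,2), \ldots, (n, L_n)$ and return to $0$. This chain is Harris ergodic with small set $C = \{0\}$, $m=1$, $\delta = 1$, small measure $\nu = P(0, \cdot)$; the regeneration time under $\nu$ is $\tau = L_N$ with $N \sim (p_n)$; and the stationary distribution $\pi$ assigns mass $p_n/Z$ to each site $(n,k)$ (uniform in $k$), where $Z = \sum_n p_n(L_n+1)$. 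Setting $f((n,k)) = y_n$ and $f(0) = 0$ for a discretization $(y_n, q_n)$ of the law of $Y$ and enforcing $p_n L_n / Z = q_n$ (which determines $p_n = Z q_n / L_n$) makes $f$ have the prescribed distribution under $\pi$.

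Under this parametrization, the two target conditions become explicit summation criteria in the free parameters $L_n$. The condition $\|\tau\|_{\nu,\psi} < \infty$ is equivalent to $\sum_n q_n \tilde{\psi}(L_n/A) < \infty$ for some $A > 0$. For the stationary functional, starting from site $(n,k)$ one has $S = (L_n - k + 1) y_n$, so by convexity and averaging over $k \in \{1, \ldots, L_n\}$ one obtains $\E_\pi \varphi(S/C) \ge (1/2) \sum_n q_n \varphi(L_n y_n / C)$, and therefore $\|S\|_{\pi,\varphi} = \infty$ is implied by $\sum_n q_n \varphi(L_n y_n / C) = \infty$ for every $C > 0$. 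The problem reduces to choosing $L_n$ satisfying both bounds simultaneously.

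The hypothesis $\|Y\|_\zeta = \infty$, i.e.\ $\sum_n q_n \zeta(y_n/k) = \infty$ for every $k \in \N$, enters via the sup-duality. I would partition $\N$ into disjoint sets $I_k$, one per scale $k$, and on $I_k$ pick $L_n$ as a near-maximizer of $\varphi(L y_n/k) - \tilde{\psi}(L)$, giving $\varphi(L_n y_n/k) \ge \zeta(y_n/k) + \tilde{\psi}(L_n)$. Choosing each $I_k$ inside the locus of large $\zeta$-contribution and tuning its size, I arrange $\sum_{n \in I_k} q_n \zeta(y_n/k) \ge 1$ together with $\sum_{n \in I_k} q_n \tilde{\psi}(L_n) \le 2^{-k}$. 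For any $C > 0$ and any $k \ge C$, $\varphi(L_n y_n/C) \ge \varphi(L_n y_n/k) \ge \zeta(y_n/k)$, so $\sum_{n \in I_k} q_n \varphi(L_n y_n/C) \ge 1$, and summing over $k \ge C$ yields divergence. Meanwhile $\sum_n q_n \tilde{\psi}(L_n) \le \sum_k 2^{-k} < \infty$ gives $\|\tau\|_{\nu,\psi} < \infty$.

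The main obstacle is that within a single $I_k$ a near-maximizer $L_n$ could a priori be so large that $\tilde{\psi}(L_n)$ alone exceeds the budget $2^{-k}/q_n$. This is precisely where the assumption that $\eta = \varphi^{-1} \circ \tilde{\psi}$ is equivalent to a Young function is used: it provides the quantitative trade-off between $\varphi(Lx)$ and $\tilde{\psi}(L)$ at the maximizer (giving $\tilde{\psi}(L_n) \lesssim \zeta(y_n/k)$ up to constants), and its convexity lets one discretize the $L_n$ into dyadic scale classes so that Jensen-type inequalities within a class translate the pointwise duality into summable tail bounds. Once the blocks $I_k$ are carved out with this balance in place, the verifications of $\|\tau\|_{\nu,\psi} < \infty$ and $\|S\|_{\pi,\varphi} = \infty$ follow by routine term-by-term estimates.
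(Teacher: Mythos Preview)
Your Markov chain is the paper's construction from Section~\ref{section_counterexample}, and the reduction you arrive at is exactly the right one: the problem is to produce excursion lengths $h$ (your $L_n$) with $\|h\|_{\tilde\psi}<\infty$ and $\|hY\|_\varphi=\infty$. The paper, however, does \emph{not} build $h$ by hand. It first shows (via Proposition~\ref{legendre_prop_pi} and Lemma~\ref{lemma_inverses}) that the convexity of $\eta=\varphi^{-1}\!\circ\tilde\psi$ yields the inverse-product inequality $\varphi^{-1}(x)\le C\,\zeta^{-1}(x)\,\tilde\psi^{-1}(x)$, and then invokes the Maligranda--Nakai theorem on pointwise multipliers of Orlicz spaces \cite{MalNa}: given any $Y\notin L_\zeta$, that theorem hands you an $X\in L_{\tilde\psi}$ with $XY\notin L_\varphi$. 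Setting $h=X$ (rounded to positive integers with $\p(X=1)>0$) finishes the proof via properties (v)--(vii) of the construction.

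Your direct block/scale construction is effectively an attempt to reprove the Maligranda--Nakai multiplier theorem, and the outline has a genuine gap at the balancing step. You ask for disjoint $I_k$ with simultaneously
\[
\sum_{n\in I_k} q_n\,\zeta(y_n/k)\ \ge\ 1
\qquad\text{and}\qquad
\sum_{n\in I_k} q_n\,\tilde\psi(L_n)\ \le\ 2^{-k},
\]
where $L_n$ is a near-maximizer of $L\mapsto\varphi(Ly_n/k)-\tilde\psi(L)$. But the very comparison you appeal to, $\tilde\psi(L_n)\lesssim\zeta(y_n/k)$, makes these two sums of the \emph{same} order: if the first is $\ge 1$, the second is bounded below by a positive constant, not by $2^{-k}$. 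Nothing in the ``dyadic scale classes / Jensen-type'' paragraph explains how to beat this; moving $L_n$ off the maximizer to shrink $\tilde\psi(L_n)$ also shrinks $\varphi(L_ny_n/k)$, and controlling that trade-off quantitatively is precisely the content of the multiplier result you would need. (Note that the freedom exploited in the proof of Proposition~\ref{weak_opt_pi}, where one may choose \emph{both} the values $x_n$ and their weights $p_n$, is absent here: $(y_n,q_n)$ is the fixed law of $Y$.) A minor additional point: your chain puts positive $\pi$-mass on the atom $\{0\}$ with $f(0)=0$, so the law of $f$ under $\pi$ acquires an extra atom at $0$ and is not exactly the law of $Y$; the paper's construction avoids this by not adjoining a separate state and instead weighting $\nu$ by $h^{-1}$.
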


\begin{prop}\label{legendre_prop_pi}
For any Young functions $\varphi, \psi$ such that $\lim_{x\to
\infty}\psi(x)/x = 0$, the function $\zeta = \zeta_{\varphi,\psi}$
is equivalent to $\varphi\circ\eta^\ast$, where $\eta(x) =
\varphi^{-1}(\psi(x)/x)$. More precisely, for all $x \ge 0$,
\begin{displaymath}
\varphi(\eta^\ast(x))\le \zeta(x) \le
\frac{1}{2}\varphi(\eta^\ast(2x)).
\end{displaymath}
\end{prop}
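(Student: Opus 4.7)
The approach mirrors the proof of Proposition \ref{legendre_prop}: I rewrite $\zeta$ in a form that exposes the Legendre duality of $\eta$. Since $\psi(y)/y = \varphi(\eta(y))$ by the very definition of $\eta$, we have
\begin{displaymath}
\zeta(x) = \sup_{y \ge 0}\bigl[\varphi(xy) - \varphi(\eta(y))\bigr].
\end{displaymath}
As preparation, I observe that the assumption $\lim_{y \to 0}\psi(y)/y = 0$, together with the fact that $\psi(y)/y$ is nondecreasing (convex $\psi$ with $\psi(0)=0$), imply $\eta(0)=0$ and $\eta$ nondecreasing; in particular $\eta^*(x) \ge 0$ for all $x \ge 0$ (take $y=0$).

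For the lower bound I exploit superadditivity of $\varphi$ on $[0,\infty)$ (immediate from convexity with $\varphi(0)=0$): whenever $xy \ge \eta(y)$,
\begin{displaymath}
\varphi(xy) - \varphi(\eta(y)) \ge \varphi(xy - \eta(y)).
\end{displaymath}
Taking the supremum over $y$ with $xy \ge \eta(y)$ and using that $\eta^*(x) \ge 0$ is realized in this regime (values of $y$ where $xy < \eta(y)$ give negative contributions to $\sup_y(xy-\eta(y))$), continuity/monotonicity of $\varphi$ yields $\zeta(x) \ge \varphi(\eta^*(x))$.

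For the upper bound the Young/Legendre inequality applied with $2x$ gives $2xy \le \eta(y) + \eta^*(2x)$ for every $y \ge 0$. Monotonicity and convexity of $\varphi$ then yield
\begin{displaymath}
\varphi(xy) \le \varphi\Bigl(\tfrac{\eta(y) + \eta^*(2x)}{2}\Bigr) \le \tfrac{1}{2}\varphi(\eta(y)) + \tfrac{1}{2}\varphi(\eta^*(2x)),
\end{displaymath}
so $\varphi(xy) - \varphi(\eta(y)) \le \tfrac{1}{2}\varphi(\eta^*(2x)) - \tfrac{1}{2}\varphi(\eta(y)) \le \tfrac{1}{2}\varphi(\eta^*(2x))$, and taking the supremum over $y$ finishes the proof.

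The argument is essentially elementary; there is no genuine obstacle. The only subtlety worth pinning down is the regularity of $\eta$ (nondecreasing, vanishing at $0$), which is needed to conclude that $\eta^*(x) \ge 0$ and that the supremum defining $\eta^*$ is captured by the regime $xy \ge \eta(y)$ where the superadditivity step is applicable.
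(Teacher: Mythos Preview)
Your proof is correct and follows essentially the same approach as the paper: both rewrite $\zeta(x)=\sup_{y}[\varphi(xy)-\varphi(\eta(y))]$ and obtain the lower bound from superadditivity of $\varphi$ on the region $xy\ge\eta(y)$. For the upper bound the paper applies convexity in the form $\varphi(xy)\le\tfrac12\varphi(\eta(y))+\tfrac12\varphi(2xy-\eta(y))$ and then takes the supremum in $y$, whereas you first invoke Young's inequality $2xy\le\eta(y)+\eta^\ast(2x)$ and then apply convexity; this is a minor reordering that in fact lets you avoid the paper's preliminary restriction to $y$ with $\varphi(xy)>\psi(y)/y$.
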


\begin{proof}
Thanks to the assumption on $\psi$, we have $\lim_{y\to 0}
\varphi(xy) - \psi(y)/y = 0$, so we can restrict our attention to
$y
>0$, such that $\varphi(xy)
> \psi(y)/y$ (note that if there are no such $y$, then $\eta^\ast(x)=\zeta(x)=0$ and the inequalities of the proposition are trivially true). For such $y$, by convexity of $\varphi$ we obtain
\begin{displaymath}
\varphi(xy) - \psi(y)/y \ge \varphi(xy - \varphi^{-1}(\psi(y)/y))
\end{displaymath}
and
\begin{displaymath}
\varphi(xy) - \psi(y)/y \le \varphi(xy) - \psi(y)/(2y) \le
\frac{1}{2}\varphi(2xy - \varphi^{-1}(\psi(y)/y)),
\end{displaymath}
which, by taking the supremum over $y$, proves the proposition.
\end{proof}

\begin{lemma}\label{auxiliary_lemma}
Assume that $\zeta$ and $\psi$ are Young functions,
$\tilde{\psi}(x) =\psi(x)/x$ is strictly increasing and
$\tilde{\psi}(0) = 0$, $\tilde{\psi}(\infty) = \infty$. Let the
function $\kappa$  be defined by
\begin{align}\label{inverse_definition}
\kappa^{-1}(x) = \zeta^{-1}(x)\tilde{\psi}^{-1}(x)
\end{align}
for all $x \ge 0$. Then there exist constants $K,x_0 \in
(0,\infty)$ such that for all $x \ge x_0$,
\begin{align}\label{auxiliary_est}
K^{-1}x \le (\vartheta^\ast)^{-1}(x)\tilde{\psi}^{-1}(\kappa(x))
\le 2x
\end{align}
where $\vartheta = \kappa^{-1}\circ \tilde{\psi}$.

Moreover the function $\tilde{\zeta} = \kappa\circ \vartheta^\ast$
is equivalent to $\zeta$.
\end{lemma}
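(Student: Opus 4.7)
The plan is to unwind the definition of $\vartheta$, prove the double inequality (\ref{auxiliary_est}) via the standard Young-type inversion, and then obtain $\tilde{\zeta}\simeq \zeta$ as a direct corollary. Expanding $\vartheta = \kappa^{-1}\circ\tilde\psi$ using (\ref{inverse_definition}) gives
\[\vartheta(y) = \zeta^{-1}(\tilde{\psi}(y))\,\tilde{\psi}^{-1}(\tilde{\psi}(y)) = y\,\zeta^{-1}(\tilde{\psi}(y)),\]
so $\vartheta^{-1}(x) = \tilde{\psi}^{-1}(\kappa(x))$ and $\vartheta(y)/y = \zeta^{-1}(\tilde\psi(y))$ is strictly increasing with limit $\infty$. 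Hence (\ref{auxiliary_est}) is exactly $K^{-1}x \le \vartheta^{-1}(x)(\vartheta^\ast)^{-1}(x) \le 2x$. The upper bound is immediate: Young's inequality $ab \le \vartheta(a) + \vartheta^\ast(b)$, which is valid for any function simply by definition of the Legendre transform, applied with $a = \vartheta^{-1}(x)$ and $b = (\vartheta^\ast)^{-1}(x)$ gives $\vartheta^{-1}(x)(\vartheta^\ast)^{-1}(x) \le 2x$.

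The lower bound is more delicate because $\vartheta$ itself need not be convex. I would reduce to a genuine Young function by introducing
\[\hat\vartheta(y) = \int_0^y \frac{\vartheta(t)}{t}\,dt,\]
which is convex because its derivative $\vartheta(t)/t$ is nondecreasing, and which is a Young function since the integrand is unbounded. A direct monotonicity estimate yields $\hat\vartheta(y) \le \vartheta(y) \le \hat\vartheta(2y)$ for every $y \ge 0$, and this two-sided comparison is preserved (up to a factor of two in the argument) by the Legendre transform: $\hat\vartheta^\ast(b/2) \le \vartheta^\ast(b) \le \hat\vartheta^\ast(b)$. Inverting gives $(\hat\vartheta^\ast)^{-1}(x) \le (\vartheta^\ast)^{-1}(x)$ and $\vartheta^{-1}(x) \ge \hat\vartheta^{-1}(x)/2$. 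The standard Young inversion $\hat\vartheta^{-1}(x)(\hat\vartheta^\ast)^{-1}(x) \ge x$ applied to $\hat\vartheta$ (Lemma \ref{lemma_inverses} in the appendix) then yields $\vartheta^{-1}(x)(\vartheta^\ast)^{-1}(x) \ge x/2$, so $K = 2$ suffices and the estimate in fact holds for every $x \ge 0$.

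Finally, for $\tilde\zeta \simeq \zeta$ one only needs to take inverses: from $\tilde\zeta = \kappa\circ\vartheta^\ast$ we get $\tilde\zeta^{-1}(u) = (\vartheta^\ast)^{-1}(\kappa^{-1}(u))$, while (\ref{inverse_definition}) rewrites as $\zeta^{-1}(u) = \kappa^{-1}(u)/\tilde\psi^{-1}(u)$. Substituting $x = \kappa^{-1}(u)$ turns the claim $\tilde\zeta^{-1}(u) \simeq \zeta^{-1}(u)$ (which is equivalent to $\tilde\zeta \simeq \zeta$) into exactly the inequality (\ref{auxiliary_est}) just proved. The only genuine technical point is the passage through the convex approximation $\hat\vartheta$ in the lower bound; this is the one place where the structural relation (\ref{inverse_definition}) enters the argument, through the fact that $\vartheta(y)/y = \zeta^{-1}(\tilde\psi(y))$ is nondecreasing.
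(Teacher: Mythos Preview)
Your proof is correct and follows essentially the same route as the paper's: both compute $\vartheta(y)=y\,\zeta^{-1}(\tilde\psi(y))$, note that $\vartheta(y)/y$ is increasing so that $\vartheta$ is equivalent to a Young function, apply Lemma~\ref{lemma_inverses}, and then deduce $\tilde\zeta\simeq\zeta$ by inverting. The only difference is cosmetic---the paper cites Lemma~\ref{asymptotic_convexity_MS} for the Young-function equivalence, whereas you build the convex minorant $\hat\vartheta(y)=\int_0^y\vartheta(t)/t\,dt$ by hand, which has the small bonus of giving the explicit constant $K=2$ valid for all $x\ge 0$.
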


\begin{proof}
Note first that $\vartheta(x) = \zeta^{-1}(\tilde{\psi}(x))x$, and
so $\vartheta$ is equivalent to a Young function (e.g. by Lemma
\ref{asymptotic_convexity_MS} in the Appendix). The inequalities
(\ref{auxiliary_est}) follow now by Lemma \ref{lemma_inverses}
from the Appendix.

Moreover
\begin{displaymath}
\tilde{\zeta}^{-1}(x) = (\vartheta^{\ast})^{-1}(\kappa^{-1}(x))
\end{displaymath}
and thus by (\ref{auxiliary_est}) for $x$ sufficiently large,
\begin{displaymath}
K^{-1}\zeta^{-1}(x) =
K^{-1}\frac{\kappa^{-1}(x)}{\tilde{\psi}^{-1}(x)}\le
\tilde{\zeta}^{-1}(x)\le
2\frac{\kappa^{-1}(x)}{\tilde{\psi}^{-1}(x)} = 2\zeta^{-1}(x),
\end{displaymath}
which clearly implies that $\tilde{\zeta}(K^{-1}x) \le \zeta(x)\le
\tilde{\zeta}(2x)$ for $x$ large enough.
\end{proof}

If now $\varphi$ is a Young function such that $\varphi\preceq
\kappa$, then $\varphi((\varphi^{-1}\circ\tilde{\psi})^\ast)
\preceq \kappa((\kappa^{-1}\circ\tilde{\psi})^\ast) \simeq \zeta$.
Thus the above Lemma, together with Theorem \ref{thm_pi} and
Proposition \ref{legendre_prop_pi} immediately gives the following

\begin{cor}\label{question_2_cor_pi} Assume that $\zeta$ and
$\psi$ are Young functions, $\tilde{\psi}(x) =\psi(x)/x$ is
strictly increasing, $\tilde{\psi}(0) = 0$, $\tilde{\psi}(\infty)
= \infty$. Let the function $\kappa$ be defined by
(\ref{inverse_definition}). If $\varphi$ is a Young function such
that $\varphi\preceq \kappa$, then there exists $K < \infty$, such
that for any Harris ergodic Markov chain $(X_n)$ on $\mathcal{X}$,
small set $C$, small measure $\nu$ and $f\colon \mathcal{X} \to
\R$,
\begin{align}\label{question_2_pi_eq}
\|S(f)\|_{\pi,\varphi} \le K\|\tau+1\|_{\nu,\psi}\Big(1+
\delta\pi(C)\|\tau+1\|_{\nu,\psi}\Big)\|f\|_{\pi,\zeta}.
\end{align}
\end{cor}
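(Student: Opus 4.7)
The plan is to deduce (\ref{question_2_pi_eq}) from Theorem \ref{thm_pi} by establishing the Orlicz norm comparison $\|f\|_{\pi,\zeta_{\varphi,\psi}}\le K'\|f\|_{\pi,\zeta}$ for a constant $K'$, and then substituting this into the bound from Theorem \ref{thm_pi}. First I would apply Theorem \ref{thm_pi} with the given $\varphi$ and $\psi$ (note that $\tilde\psi(0)=0$ guarantees $\lim_{x\to 0}\psi(x)/x=0$), obtaining
\begin{displaymath}
\|S(f)\|_{\pi,\varphi}\le m\|\tau+1\|_{\nu,\psi}\bigl(1+\delta\pi(C)\|\tau+1\|_{\nu,\psi}\bigr)\|f\|_{\pi,\zeta_{\varphi,\psi}}.
\end{displaymath}
Proposition \ref{legendre_prop_pi} shows $\zeta_{\varphi,\psi}\simeq\varphi\circ\eta^\ast$ for $\eta=\varphi^{-1}\circ\tilde\psi$, and Lemma \ref{auxiliary_lemma} shows $\zeta\simeq\kappa\circ\vartheta^\ast$ for $\vartheta=\kappa^{-1}\circ\tilde\psi$. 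Since equivalent Young functions define equivalent Orlicz norms, the task reduces to proving the domination $\varphi\circ\eta^\ast\preceq\kappa\circ\vartheta^\ast$ announced immediately before the statement of the corollary.

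To prove this domination I would proceed in three substeps. First, rewrite $\varphi\preceq\kappa$ in inverse form: from $\varphi(x)\le C_1\kappa(C_2x)$ for $x$ large and concavity of $\varphi^{-1}$ with $\varphi^{-1}(0)=0$ one derives $\kappa^{-1}(y)\le C\varphi^{-1}(y)$ for $y$ large and some $C\ge 1$. Composing with $\tilde\psi$ yields $\vartheta(x)\le C\eta(x)$ on the tail, i.e.\ $\vartheta\preceq\eta$. Second, by the standard Legendre duality for Young functions (which reverses the preorder $\preceq$, and is recalled in the Appendix) one obtains $\eta^\ast(x)\le C^{-1}\vartheta^\ast(Cx)$ for $x$ large, and hence $\eta^\ast(x)\le\vartheta^\ast(Cx)$. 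Third, monotonicity of $\varphi$ gives $\varphi(\eta^\ast(x))\le\varphi(\vartheta^\ast(Cx))$; applying $\varphi\preceq\kappa$ at the argument $\vartheta^\ast(Cx)$ yields $\varphi(\vartheta^\ast(Cx))\le C_1\kappa(C_2\vartheta^\ast(Cx))$, and using convexity of $\vartheta^\ast$ with $\vartheta^\ast(0)=0$, so that $C_2\vartheta^\ast(Cx)\le\vartheta^\ast(C_2Cx)$ for $C_2\ge 1$, one absorbs the inner constant to conclude $\varphi(\eta^\ast(x))\le C_1\kappa(\vartheta^\ast(C_2Cx))$, which is the desired $\varphi\circ\eta^\ast\preceq\kappa\circ\vartheta^\ast$.

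The main obstacle, and the only real content beyond chaining together the earlier results, is the bookkeeping of constants in the last substep: one has to keep track of whether each multiplicative factor sits in front of a Young function or inside its argument, because convexity allows one to move constants from outside to inside freely but the reverse direction is only available under a $\Delta_2$-type assumption, which we do not wish to impose. Once the chain of dominations is in hand, the resulting Orlicz norm inequality, substituted into the bound from Theorem \ref{thm_pi}, yields (\ref{question_2_pi_eq}) with a constant $K$ depending on $m$, $\delta\pi(C)$ and the implicit constants from the equivalences in Proposition \ref{legendre_prop_pi}, Lemma \ref{auxiliary_lemma} and the domination argument above.
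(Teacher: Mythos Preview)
Your proposal is correct and follows essentially the same route as the paper: the paper states, in the sentence immediately preceding the corollary, that $\varphi\preceq\kappa$ implies $\varphi\circ(\varphi^{-1}\circ\tilde\psi)^\ast \preceq \kappa\circ(\kappa^{-1}\circ\tilde\psi)^\ast \simeq \zeta$, and then invokes Theorem~\ref{thm_pi} and Proposition~\ref{legendre_prop_pi}; you have simply unpacked that one-line domination claim into its three natural substeps. Two minor inaccuracies worth cleaning up: the order-reversal of $\preceq$ under the Legendre transform is not actually among the Appendix lemmas (though it is standard and your inline computation $\eta^\ast(y)\le C^{-1}\vartheta^\ast(Cy)$ already proves it), and in your final sentence the constant $K$ should not depend on $\delta\pi(C)$, which appears explicitly in the bound --- it depends only on $m$ and the equivalence constants coming from $\varphi\preceq\kappa$, Proposition~\ref{legendre_prop_pi} and Lemma~\ref{auxiliary_lemma}.
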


\paragraph{Remark} For slowly growing functions $\psi$ and $\zeta$
there may be no Orlicz function $\varphi$ such that $\varphi \preceq \kappa$. This is
not surprising since as we will see from the construction
presented in Section \ref{section_counterexample} the
$\pi$-integrability of $S(f)$ is closely related to integrability
of functions from a point-wise product of Orlicz spaces. In
consequence $S(f)$ may not even be integrable.

We have the following optimality result corresponding to Corollary \ref{question_2_cor_pi}. Its proof will be presented in the next section.

\begin{prop}\label{Question_2_opt_pi} Assume that $\zeta$ and
$\psi$ are Young functions, $\tilde{\psi}(x) =\psi(x)/x$ is
strictly increasing, $\tilde{\psi}(0) = 0$, $\tilde{\psi}(\infty)=
\infty$. Let the function $\kappa$ be defined by
(\ref{inverse_definition}) and let $\varphi$ be a Young function
such that for every ergodic Markov chain $(X_n)$, small set $C$,
small measure $\nu$ and $f\colon \mathcal{X} \to \R$ with
$\|\tau\|_{\nu,\psi}< \infty$ and $\|f\|_{\pi,\zeta} < \infty$, we
have $\|S(f)\|_{\pi,\varphi}< \infty$. Then $\varphi \preceq
\kappa$.
\end{prop}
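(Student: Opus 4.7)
The plan is to argue by contraposition: assuming $\varphi \not\preceq \kappa$, I construct an explicit Harris ergodic chain with an atom, a small measure $\nu$, and a function $f$ for which $\|\tau\|_{\nu,\psi} < \infty$ and $\|f\|_{\pi,\zeta} < \infty$, yet $\|S(f)\|_{\pi,\varphi} = \infty$. The construction parallels the weak-optimality arguments behind the preceding results but is calibrated to the stationary case, where $S(f)$ gathers extra contributions from points landing in the middle of a block.

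First I extract scales from the failure of domination. By the negation of $\preceq$, for each $M \ge 1$ there exists $x_M \ge M$ with $\varphi(x_M) > M^{3} \kappa(M x_M)$. Setting $y_M := M x_M$, $v_M := \kappa(y_M)$, $\alpha_M := \zeta^{-1}(v_M)$ and $n_M := \tilde\psi^{-1}(v_M)$, the definition of $\kappa$ forces $\alpha_M n_M = y_M$, while $\psi(n_M) = n_M v_M$ and $\zeta(\alpha_M) = v_M$; also $\varphi(y_M/M) > M^{3} v_M$ and $y_M, v_M, n_M, \alpha_M \to \infty$.

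Second I build the chain. Let $\mathcal{X} = \{0\} \cup \{(M,k) : M \ge 1,\ 1 \le k \le n_M\}$ with $\{0\}$ an atom ($m=1$, $\delta = 1$, $C = \{0\}$, $\nu = P(0,\cdot)$). From $0$ jump to $(M,1)$ with probability $p_M := c/(M^{2} n_M v_M)$, choosing $c$ so that $\sum_M p_M \le 1$ (residual mass becomes a self-loop at $0$); from $(M,k)$ with $k < n_M$ move deterministically to $(M, k+1)$, and from $(M,n_M)$ return to $0$. Put $f(0) := 0$ and $f((M,k)) := \alpha_M$. The chain is irreducible, aperiodic (self-loop), positive recurrent (since $\E_\nu\tau<\infty$), hence Harris ergodic. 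Under $\nu$, $\tau$ takes value $n_M$ with probability $p_M$, so
\[
\E_\nu \psi(\tau) \;\lesssim\; \sum_M p_M \psi(n_M) \;=\; c\sum_M M^{-2} \;<\; \infty,
\]
and $\pi$ places mass proportional to $p_M$ on each $(M,k)$, giving $\E_\pi \zeta(f) \lesssim \sum_M p_M n_M \zeta(\alpha_M) = c\sum_M M^{-2} < \infty$; both hypotheses hold.

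Third I show $\|S(f)\|_{\pi,\varphi} = \infty$. If $X_0 = (M, k)$, the chain deterministically hits the atom in $n_M - k + 1$ steps, giving $S(f) = (n_M - k + 1) \alpha_M$. Hence for any $T > 0$,
\[
\E_\pi \varphi(S(f)/T) \;\gtrsim\; \sum_M p_M \sum_{i=1}^{n_M} \varphi(i \alpha_M/T) \;\ge\; \sum_M p_M\, (n_M/2)\, \varphi(y_M/(2T)),
\]
using $\sum_{i \ge n_M/2}^{n_M} \varphi(i \alpha_M/T) \ge (n_M/2)\varphi(y_M/(2T))$. For $M \ge 2T$ we have $y_M/(2T) \ge y_M/M$, so $\varphi(y_M/(2T)) \ge \varphi(y_M/M) > M^{3} v_M$; substituting $p_M$, each such term is $\gtrsim M$, forcing divergence and contradicting $\|S(f)\|_{\pi,\varphi} < \infty$. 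The main technical point is the first step — extracting the uniform quantitative inequality $\varphi(y_M/M) > M^{3} \kappa(y_M)$ with $y_M \to \infty$ from the purely asymptotic hypothesis; the exponent $3$ is chosen comfortably above the $2$ in $p_M \propto M^{-2}$ so that the final series still diverges.
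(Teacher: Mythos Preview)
Your argument is correct (up to the harmless technicality that $n_M = \tilde\psi^{-1}(v_M)$ need not be an integer; replacing it by $\lfloor n_M\rfloor$ and using $n_M\to\infty$ only changes constants and the proof goes through unchanged). The key computations are right: with $p_M = c/(M^2 n_M v_M)$ one gets $\sum_M p_M\psi(n_M) = c\sum_M M^{-2}$ and $\E_\pi\zeta(f) \propto \sum_M n_M p_M v_M = c\sum_M M^{-2}$, while for the stationary chain started at $(M,k)$ one has $S(f) = (n_M-k+1)\alpha_M$, so the lower bound $\E_\pi\varphi(S(f)/T) \gtrsim \sum_{M\ge 2T} p_M(n_M/2)\varphi(y_M/M) \gtrsim \sum_{M\ge 2T} M$ diverges as claimed.

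The paper's own proof is quite different in spirit. Rather than building a new counterexample tailored to $\kappa$, it invokes Proposition~\ref{weak_opt_pi} (whose proof already contains a construction of the same flavour as yours) to obtain $\zeta_{\varphi,\psi}\preceq\zeta$, and then derives $\varphi\preceq\kappa$ purely algebraically: Proposition~\ref{legendre_prop_pi} gives $\zeta_{\varphi,\psi}\simeq\varphi\circ\eta^\ast$ with $\eta=\varphi^{-1}\circ\tilde\psi$, Lemma~\ref{auxiliary_lemma} gives $\zeta\simeq\kappa\circ\vartheta^\ast$ with $\vartheta=\kappa^{-1}\circ\tilde\psi$, and then Lemma~\ref{lemma_inverses} is used to unwind $(\vartheta^\ast)^{-1}\circ\kappa^{-1}\preceq(\eta^\ast)^{-1}\circ\varphi^{-1}$ into $\kappa^{-1}(x)\le C\varphi^{-1}(Cx)$. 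Your route is more self-contained and avoids the Legendre-transform machinery entirely, at the cost of essentially reproving (a version of) Proposition~\ref{weak_opt_pi} with parameters chosen directly in terms of $\kappa$; the paper's route is modular, reusing the earlier weak-optimality result and isolating the passage from $\zeta_{\varphi,\psi}$ to $\kappa$ as a separate analytic step.
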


\paragraph{Remark} By convexity of $\varphi$, the condition $\varphi\preceq \kappa$ holds iff there
exists a constant $K < \infty$ and $x_0 >0$, such that
\begin{displaymath}
K\varphi^{-1}(x) \ge \tilde{\psi}^{-1}(x)\zeta^{-1}(x)
\end{displaymath}
for $x > x_0$. Thus under the assumptions that $\tilde{\psi}$ is
strictly increasing $\tilde{\psi}(0) = 0$, $\tilde{\psi}(\infty) =
\infty$, the above condition characterizes the triples of Young
functions such that $\|f\|_{\pi,\zeta}< \infty$ implies $\|S(f)\|_{\pi,\varphi}<\infty$ for all Markov
chains with $\|\tau\|_{\nu,\psi}<\infty$.

\paragraph{Examples}

Just as in the previous section we will now present some concrete
formulas for classical Young functions, some of which will be used in
Section \ref{Section_applications} to derive tail inequalities for
additive functionals of stationary Markov chains.

\begin{enumerate}
\item If $\varphi(x) =x^p$ and $\psi(x)= x^r$, where $r > p+1 \ge
2$ then $\zeta_{\varphi,\psi}(x) \simeq x^{\frac{p(r-1)}{r-p-1}}$.

\item If $\varphi(x) = \exp(x^\alpha) - 1$ and $\psi(x) =
\exp(x^\beta)-1$, where $\beta \ge \alpha$ then
$\zeta_{\varphi,\psi}(x) \simeq
\exp(x^{\frac{\alpha\beta}{\beta-\alpha}})-1$.

\item If $\varphi(x) = x^p$ and $\psi(x) = \exp(x^\beta)-1$, where
$\beta >0$ then $\zeta_{\varphi,\psi}(x) \simeq x^p\log^{p/\beta}
x$.

\item If $\psi(x) = x^r$ and $\zeta(x) = x^p$ ($r\ge 2, p \ge (r-1)/(r-2)$)
then $\varphi(x) \simeq x^{\frac{(r-1)p}{r+p-1}}$.

\item If $\psi(x) = \exp(x^\beta)-1$ and $\zeta(x) =
\exp(x^\alpha)-1$ ($\alpha,\beta > 0$), then $\varphi(x) \simeq
\exp(x^{\frac{\alpha\beta}{\alpha+\beta}})-1$.

\item If $\psi(x) = \exp(x^\beta)-1$ ($\beta>0$) and $\zeta(x) =
 x^p$ ($p>1$), then $\varphi(x) \simeq \frac{x^p}{\log^{p/\beta} x}$.
\end{enumerate}

\section{Proofs of optimality\label{Section_proofs}}

\subsection{Main counterexample\label{section_counterexample}}

 We will now
introduce a general construction of a Markov chain which will
serve as an example in proofs of all our optimality theorems.

Let $\mathcal{S}$ be a Polish space and let $\alpha$ be a Borel
probability measure on $\mathcal{S}$. Consider two functions
$\tilde{f}\colon \mathcal{S} \to \R$ and $h \colon \mathcal{S} \to
\N\setminus\{0\}$. We will construct a Markov chain on some Polish
space $\mathcal{X} \supseteq\mathcal{S}$, a small set $C \subseteq
\mathcal{X}$, a probability measure $\nu$ and a function $f\colon
\mathcal{X} \to \R$, possessing the following properties.

\begin{center}
{\bf Properties of the chain}
\end{center}
\begin{itemize}
\item[(i)] The condition (\ref{small_set}) is satisfied with $m=1$
and $\delta = 1$ (in other words $C$ is an atom for the chain),

\item[(ii)] $\nu(\mathcal{S})= 1$,

\item[(iii)] for any $x \in \mathcal{S}$, $\p_x(\tau + 1= h(x)) =
1$,

\item[(iv)] for any $x \in \mathcal{S}$, $\p_x(S(f) =
\tilde{f}(x)h(x)) = 1$,

\item[(v)] For any function $G\colon \R \to \R$ we have
\begin{displaymath}
\E_\nu G(S(f)) = R \int_{\mathcal{S}} G(\tilde{f}(x)h(x))
h(x)^{-1}\alpha(dx)
\end{displaymath}
and
\begin{displaymath}
\E_\nu G(\tau+1) = R\int_{\mathcal{S}}G(h(x))h(x)^{-1}\alpha(dx),
\end{displaymath}
where $R = (\int_C h(y)^{-1}\alpha(dy))^{-1}$,

\item[(vi)] $(X_n)$ admits a unique stationary distribution $\pi$
and the law of $f$ under $\pi$ is the same as the law of
$\tilde{f}$ under $\alpha$,

\item[(vii)] for any nondecreasing function $F\colon \mathcal{X}
\to \R$,
\begin{displaymath}
\E_\pi F(|S(f)|) \ge \frac{1}{2}\int_{\mathcal{S}}
F(h(x)|\tilde{f}(x)|/2)\alpha(dx).
\end{displaymath}

\item[(viii)] if $\alpha(\{x\colon h(x) = 1\}) >0$ then the chain
is Harris ergodic.
\end{itemize}

\paragraph{Construction of the chain}

Let $\mathcal{X} = \bigcup_{n=1}^\infty \{x\in \mathcal{S} \colon
h(x) \ge n\}\times \{n\}$. As a disjoint union it clearly
possesses a natural structure of a Polish space inherited from
$\mathcal{S}$. Formally $\mathcal{S} \not \subseteq \mathcal{X}$
but it does not pose a problem as we can clearly identify
$\mathcal{S}$ with $\mathcal{S}\times \{1\} =\{x\in
\mathcal{S}\colon h(x) \ge 1\}\times\{1\}$.

The dynamics of the chain will be very simple.
\begin{itemize}
\item If $X_n = (x,i)$ and $h(x) > i$, then with probability one
$X_{n+1} = (x,i+1)$.

\item If $X_n = (x,i)$ and $h(x) = i$, then $X_{n+1} = (y,1)$,
where $y$ is distributed according to the probability measure

\begin{align}\label{nu_definition}
\nu(dy) = Rh(y)^{-1}\alpha(dy).
\end{align}
\end{itemize}

More formally, the transition function of the chain is given by
\begin{displaymath}
P((x,i),A) = \left\{
\begin{array}{ccc}
\delta_{(x,i+1)}(A)&\textrm{if}& i < h(x) \\
\nu(\{y\in\mathcal{S}\colon (y,1) \in A\}) &\textrm{if}&
i = h(x).
\end{array}
 \right.
\end{displaymath}

In other words the chain describes a particle, which after
departing from a point $(x,1) \in \mathcal{S}$ changes its 'level'
by jumping deterministically to points $(x,2), \ldots, (x,h(x))$
and then goes back to 'level' one by selecting the first
coordinate according to the measure $\nu$.

Clearly $\nu(\mathcal{S}) =1$ and so condition (ii) is satisfied.
Note that $\alpha$ and $\nu$ are formally measures on
$\mathcal{S}$, but we may and will sometimes treat them as
measures on $\mathcal{X}$.

Let now $C = \{(x,i) \in \mathcal{X}\colon h(x)= i\}$. Then
$P((x,i),A) = \nu(A)$ for any $(x,i)\in C$ and a Borel subset $A$
of $\mathcal{X}$, which shows that (\ref{small_set}) holds with
$m=1$ and $\delta = 1$ .

Let us now prove condition (iii). Since $C$ is an atom for the
chain, $Y_n = 1$ iff $X_n \in C$. Moreover if $X_0 = (x,1) \simeq
x \in \mathcal{S}$, then $X_{i} = (x,i+1)$ for $i+1 \le h(x)$ and
$\tau = \inf\{i\ge 0\colon X_i \in C\} = \inf\{i\ge 0\colon i+1 =
h(x)\} = h(x) - 1$, which proves property (iii).

To assure that property (iv) holds it is enough to define
\begin{displaymath}
f((x,i)) = \tilde{f}(x),
\end{displaymath}
since then $f(X_0) = (x,1)$ implies that $f(X_n) = \tilde{f}(x)$
for $n \le \tau$.

Condition (v) follows now from properties (ii), (iii) and (iv)
together with formula $(\ref{nu_definition})$.

We will now pass to conditions (vi) and (vii).

By the construction of the chain it is easy to prove that the
chain admits a unique stationary measure $\pi$ given by
\begin{displaymath}
\pi(A\times \{k\}) = \alpha(A)n^{-1}
\end{displaymath}
for $A \subseteq \{x\in \mathcal{S}\colon h(x) = n\}$ and any $k
\le n$ . Thus for any Borel set $B\subseteq \R$ we have
\begin{align*}
\pi(\{(x,i)\in \mathcal{X}\colon f((x,i)) \in B\}) &=
\pi(\{(x,i)\in \mathcal{X}\colon \tilde{f}(x) \in B\}) \\
&= \sum_{n\ge 1} \pi(\{(x,i)\in \mathcal{X} \colon h(x) = n,
\tilde{f}(x)\in B\}\\
& = \sum_{n\ge 1} n\cdot n^{-1}\alpha(\{x \in \mathcal{S}\colon
h(x) = n, \tilde{f}(x)\in B\})\\
& =\alpha(\{x\in \mathcal{S}\colon \tilde{f}(x) \in B\}).
\end{align*}

As for (vii), $X_0= (x,i)$ implies that
\begin{displaymath}
S(f) = (h(x)-i+1)\tilde{f}(x).
\end{displaymath}
Thus, letting $A_{n,k} = \{(x,k)\in \mathcal{X}\colon h(x) = n\}$,
$B_n = \{x\in \mathcal{S}\colon h(x) = n\}$,  we get
\begin{align*}
\E_\pi F(|S(f)|) &= \int_\mathcal{X}F((h(x) - i+1)|\tilde{f}(x)|)
\pi(d(x,i)) = \sum_{n,k}\int_{A_{n,k}} F( (n - k
+1)|\tilde{f}(x)|)\pi(d(x,i))\\
&=\sum_{n}\sum_{k\le n}
\int_{B_n}n^{-1}F((n-k+1)|\tilde{f}(x)|)\alpha(dx) \ge \sum_n
\int_{B_n} \frac{1}{2}F(n|\tilde{f}(x)|/2)\alpha(dx)\\
&=\frac{1}{2}\int_{\mathcal{S}} F(h(x)|\tilde{f}(x)|/2)\alpha(dx),
\end{align*}
proving (vii).

Now we will prove (viii). Note that $A := \{x\in
\mathcal{S}\colon h(x)=1\} \subseteq C$. Thus if $\alpha(A) > 0$ then
also $\nu(C) > 0$, which proves that the chain is strongly
aperiodic (see e.g. chapter 5 of \cite{MT} or Chapter 2 of
\cite{Numm}). Moreover one can easily see that $\pi$ is an
irreducibility measure for the chain and the chain is Harris
recurrent.  Thus by Proposition 6.3. of \cite{Numm} the chain is
Harris ergodic (in fact in \cite{Numm} ergodicity is defined as
aperiodicity together with positiveness and Harris recurrence and
Proposition 6.3. states that this is equivalent to convergence of
$n$-step probabilities for any initial point).

What remains to be proven is condition (i). Since $\pi(C) > 0$ we
have $C\in\mathcal{E}^+$, whereas inequality (\ref{small_set}) for
$m = \delta = 1$ is satisfied by the construction.

\subsection{The chain started from $\nu$}

We will start with the proof of Proposition \ref{weak_opt_nu}. The
chain constructed above will allow us to reduce it to elementary
techniques from the theory of Orlicz spaces.

\begin{proof}[Proof of Proposition \ref{weak_opt_nu}]
Assume that the function $\rho$ does not satisfy the condition
$\rho_{\varphi,\psi} \preceq \rho$. Thus there exists a sequence of
numbers $x_n \to \infty$ such that
\begin{displaymath}
\rho(x_n) < \rho_{\varphi,\psi}(x_n2^{-n}).
\end{displaymath}
By the definition of $\rho_{\varphi,\psi}$ this means that there
exists a sequence $t_n > 0$ such that
\begin{displaymath}
\frac{\varphi(x_nt_n2^{-n})}{t_n} \ge \rho(x_n) +
\frac{\psi(t_n)}{t_n}.
\end{displaymath}
One can assume that $t_n \ge 2$. Indeed, for all $n$ large enough if $t_n \le 2$,
then
\begin{displaymath}
\frac{\varphi((x_n2^{-1})2^{-(n-1)}\cdot 2)}{2} \ge
\frac{\varphi(x_nt_n2^{-n})}{t_n} \ge \rho(x_n) \ge
2\rho(x_n2^{-1}) \ge \rho(x_n2^{-1}) +\frac{\psi(2)}{2}.
\end{displaymath}
Set $\tau_n =\lfloor t_{n}\rfloor$ for $n\ge 1$ and $\tau_0 = 1$.
We have for $n\ge 1$
\begin{align}\label{weak_opt nu_tau}
\frac{\varphi(x_{n}\tau_n2^{1-n})}{\tau_n} \ge\frac{\varphi(x_n
t_{n} 2^{-n})}{t_{n}} \ge \rho(x_n) + \frac{\psi(t_{n})}{t_{n}}
\ge \rho(x_n) + \frac{\psi(\tau_n)}{\tau_n}\ge 1,
\end{align}
where in the last inequality we used assumption (A). Define now
$p_n = C2^{-n}(\psi(\tau_n)/\tau_n + \rho(x_n))^{-1}$, where $C$
is a constant such that $\sum_{n\ge 0} p_n = 1$. Consider a Polish
space $\mathcal{S}$ with a probability measure $\alpha$, a
partition $\mathcal{S} = \bigcup_{n\ge 0} A_n$, $\alpha(A_n) =
p_n$ and two functions $h$ and $\tilde{f}$, such that
$\tilde{f}(x) = x_n$ and $h(x)=\tau_n$ for $x\in A_n$.

Let $(X_n)_{n\ge 0}$ be the Markov chain obtained by applying to
$\mathcal{S}$, $\tilde{f}$ and $h$ the main construction
introduced in Section \ref{section_counterexample}. By property
(viii) and the condition $\tau_0 = 1$, the chain is Harris ergodic. By
property (v) we have
\begin{align*}
\E_\nu \psi(\tau+1) = R\int_{\mathcal{S}}
\psi(h(x))h(x)^{-1}\alpha(dx) = R\sum_{n\ge
0}\frac{\psi(\tau_n)}{\tau_n}p_n \le 2RC
\end{align*}
by the definition of $p_n$. Thus the chain $(X_n)$ satisfies
$\|\tau\|_{\nu,\psi} < \infty$.

By property (vi) we get
\begin{displaymath}
\E_\pi \rho(f) = \int_{\mathcal{S}} \rho(\tilde{f}(x))\alpha(dx) =
\sum_{n\ge 0} \rho(x_n)p_n \le 2C.
\end{displaymath}

On the other hand for any $\theta > 0$ we have by property (v),
the construction of functions $\tilde{f},g$ and (\ref{weak_opt
nu_tau}),
\begin{align*}
\E_\nu \varphi(\theta |S(f)|) &= R\int_\mathcal{S}
\varphi(\theta|\tilde{f}(x)|h(x))h(x)^{-1}\alpha(dx)\\
&\ge R\sum_{n\ge 1}
\frac{\varphi(2^{n-1}\theta x_n\tau_n2^{1-n})}{\tau_n}p_n
\ge R \sum_{2^{n-1}\theta \ge
1}2^{n-1}\theta\frac{\varphi(x_n\tau_n2^{1-n})}{\tau_n}p_n\\
&\ge R\sum_{2^{n-1}\theta \ge 1} 2^{n-1}\theta\Big(
\rho(x_n)+\frac{\psi(\tau_n)}{\tau_n}\Big)p_n = \infty,
\end{align*}
which shows that $\|S(f)\|_{\nu,\varphi} = \infty$ and proves the
proposition.
\end{proof}

\begin{proof}[Proof of Theorem \ref{stron_opt_nu}]

Let $\mathcal{S}$ be a Polish space, $\alpha$ a probability
measure on $\mathcal{S}$ and $\tilde{f}\colon \mathcal{S} \to \R$
a function whose law under $\alpha$ is the same as the law of $Y$.

We will consider in detail only the case when $\lim_{x\to \infty} \varphi(x)/x= \infty$. It is easy to see using formula (\ref{mean}) and the construction below that in the case $\varphi \simeq id$ the theorem also holds (note that in this case also $\rho \simeq id$).

By the convexity assumption and Lemma \ref{asymptotic_convexity} from the Appendix, we obtain that $\eta = (\psi^\ast)^{-1}\circ \varphi^\ast$ is equivalent to a Young function. Thus by Proposition \ref{legendre_prop} and
Lemma \ref{lemma_inverses} from the Appendix we get
\begin{align}\label{duals_equivalence}
\rho^\ast(\cdot) \simeq (\psi^\ast)^{-1}\circ \varphi^\ast(\cdot)
\simeq \frac{\varphi^\ast(\cdot)}{\psi^{-1}\circ \varphi^\ast
(\cdot)}.
\end{align}
By Lemma \ref{general_duality} in the Appendix (or in the case when $\rho^\ast \simeq id$ by the well known facts about the spaces $L_1$ and $L_\infty$), there exists a function $g\colon
\mathcal{S} \to \R_+$ such that
\begin{align}\label{integrals}
\int_\mathcal{S} \frac{\varphi^\ast(g(x))}{\psi^{-1}( \varphi^\ast
(g(x)))}\alpha(dx) < \infty \;\textrm{and}\;
\int_\mathcal{S}|\tilde{f}(x)|g(x) \alpha(dx) =\infty.
\end{align}

Define the function $h\colon\mathcal{S} \to \N\setminus\{0\}$ by
$h(x) = \lfloor \psi^{-1}(\varphi^\ast(g(x)))\rfloor + 1$.

Let now $\mathcal{X}$, $(X_n)$ and $f$ be the Polish space, Markov
chain and function obtained from $\mathcal{S},\alpha,\tilde{f},h$
according to the main construction of Section
\ref{section_counterexample}. Note that we can assume that
$\alpha(\{x\colon h(x) = 1\}) > 0$ and thus by property (viii)
this chain is Harris ergodic.

Note that by the definition of $h$, if $h(x) \ge 2$ then $h(x) \le
2\psi^{-1}(\varphi^\ast(g(x)))$. Thus by property (v) and
(\ref{integrals}) we get
\begin{align*}
\E_\nu \psi((\tau+1)/2) &=
R\int_{\mathcal{S}}\psi(h(x)/2)h(x)^{-1}\alpha(dx) \\
&\le R\psi(1/2) +
R\int_{\mathcal{S}}\frac{\varphi^\ast(g(x))}{\psi^{-1}(\varphi^\ast(g(x)))}\alpha(dx)
< \infty,
\end{align*}
which implies that $\|\tau\|_{\nu,\psi} < \infty$. Recall now the
definition of $\nu$ given in (\ref{nu_definition}). For all $a >
0$ we have
\begin{displaymath}
\E_\nu \varphi(S(f)/a) =
\int_\mathcal{S}\varphi(\tilde{f}(x)h(x)/a)\nu(dx),
\end{displaymath}
which implies that $\|S(f)\|_{\nu,\varphi} < \infty$ iff  $
\|\tilde{f}h\|_{\nu,\varphi} < \infty$ (note that on the left hand
side $\nu$ is treated as a measure on $\mathcal{X}$ and on the
right hand side as a measure on $\mathcal{S}$).

Note however that by (\ref{integrals}) we have
\begin{align*}
\int_\mathcal{S} \varphi^\ast(g(x))\nu(dx) &=
R\int_\mathcal{S}\frac{\varphi^\ast(g(x))}{h(x)}\alpha(dx)\le
R\int_\mathcal{S}\frac{\varphi^\ast(g(x))}{\psi^{-1}(\varphi^\ast(g(x)))}\alpha(dx)
< \infty,
\end{align*}
which gives $\|g\|_{\nu,\varphi^\ast} < \infty$, but
\begin{align*}
\int_\mathcal{S} \tilde{f}(x) h(x) g(x) \nu(dx) =
R\int_\mathcal{S} \tilde{f}(x)g(x)\alpha(dx) = \infty.
\end{align*}
This shows that $\|\tilde{f}h\|_{\nu,\varphi} = \infty$ and ends
the proof.
\end{proof}

\begin{proof}[Proof of Proposition \ref{Question_2_opt_nu}]
 Note that for any function
$f$ (not necessarily equivalent to a Young function) we have
$f^{\ast\ast} \le f$. Thus by Proposition \ref{legendre_prop} we
have
\begin{displaymath}
\rho_{\varphi,\psi} \preceq \rho \iff ((\psi^\ast)^{-1}\circ
\varphi^\ast)^\ast \preceq \rho \implies \rho^\ast \preceq
(\psi^\ast)^{-1}\circ \varphi^\ast \iff \psi^\ast\circ\rho^\ast
\preceq \varphi^\ast \iff \varphi \preceq
(\psi^\ast\circ\rho^\ast)^\ast,
\end{displaymath}
which ends the proof by Proposition \ref{weak_opt_nu}.
\end{proof}

\subsection{The stationary case}

For the proofs of results concerning optimality of our estimates
for the chain started from $\pi$, we will also use the general
construction from Section \ref{section_counterexample}. As already
mentioned, in this case the problem turns out to be closely
related to the classical theory of point-wise multiplication of
Orlicz spaces (we refer to \cite{ON} for an overview)

\begin{proof}[Proof of Proposition \ref{weak_opt_pi}]
Assume that the function $\zeta$ does not satisfy the condition
$\zeta_{\varphi,\psi} \preceq \zeta$. Thus there exists a sequence
of numbers $x_n \to \infty$, such that $\zeta(x_n) <
\zeta_{\varphi, \psi}(x_n2^{-n})$, i.e. for some sequence $t_n
>0$, $n=1,2,\ldots$, we have
\begin{displaymath}
\varphi(x_n2^{-n}t_n) \ge \zeta(x_n) + \psi(t_n)/t_n.
\end{displaymath}
Similarly as in the proof of Proposition \ref{weak_opt_nu}, we
show that without loss of generality we can assume that $t_n$ are
positive integers and thus the right hand side
above is bounded from below. Let us additionally define $t_0 = 1$.

Let $p_n = C2^{-n}(\zeta(x_n) + \psi(t_n)/t_n)^{-1}$, where $C$ is
such that $\sum_{n\ge 0}p_n =1$ and consider a probability space
$(\mathcal{S},\sigma)$, where $S = \bigcup_n A_n$ with $A_n$
disjoint and $\alpha(A_n) =p_n$ together with two functions
$\tilde{f}\colon \mathcal{S}\to \R$ and $h\colon \mathcal{S}\to
\R$ such that for $x\in A_n$, we have $\tilde{f}(x) =x_n$, $h(x)=
t_n$.

By applying to $\mathcal{S}$,$\tilde f$ and $h$ the general
construction of Section \ref{section_counterexample}, we get a
Harris ergodic Markov chain and a function $f$, which by
properties (v) and (vi) satisfy
\begin{align*}
\E_\nu \psi(\tau+1) &= R\sum_{n\ge 0}\frac{\psi(t_n)}{t_n}p_n \le
2RC,\\
\E_\pi\zeta(f)&= \sum_{n\ge 0}\zeta(x_n)p_n \le C.
\end{align*}
But by property (vii) we get for any $\theta > 0$,
\begin{align*}
\E_\pi \varphi(\theta |S(f)| &\ge \frac{1}{2}\int_{\mathcal{S}}
\varphi(\theta h(x)|\tilde{f}(x)|/2)\alpha(dx)=
\frac{C}{2}\sum_{n\ge 0}\varphi(\theta x_n t_n/2)p_n\\
&\ge \frac{C}{2}\sum_{n\ge
1}\varphi(2^{n-1}\theta x_nt_n2^{-n})p_n \ge
\frac{C}{2} \sum_{2^{n-1}\theta\ge 1}2^{n-1}\theta\varphi(x_nt_n2^{-n})p_n \\
&\ge \frac{C}{2}\sum_{2^{n-1}\theta\ge 1}2^{n-1}\theta(\zeta(x_n) +
\psi(t_n)/t_n)p_n = \infty,
\end{align*}
which ends the proof.
\end{proof}

\begin{proof}[Proof of Theorem \ref{stron_opt_pi}]
Consider first the case $\lim_{x\to \infty}\eta(x)/x = \infty$.

We will show that for some constant $C$ and $x$ large enough
we have
\begin{align}\label{Mal_condition}
\varphi^{-1}(x) \le C\zeta^{-1}(x)\tilde{\psi}^{-1}(x).
\end{align}
We have $\eta^{-1}(x)= \tilde{\psi}^{-1}(\varphi(x))$ and thus by
the assumption on $\eta$ and Lemma \ref{lemma_inverses} from the
Appendix, we get $\varphi^{-1}(x) \le
C(\eta^\ast)^{-1}(\varphi^{-1}(x))\tilde{\psi}^{-1}(x)$ for some
constant $C < \infty$ and $x$ large enough. But by Proposition
\ref{legendre_prop_pi}, $(\eta^\ast)^{-1}(\varphi^{-1}(x)) \le
2\zeta^{-1}(x)$ and thus (\ref{Mal_condition}) follows.

If $\lim_{x\to \infty}\eta(x)/x < \infty$ then (\ref{Mal_condition}) also holds if we interpret $\zeta^{-1}$ as the generalized inverse (note that in this case $L_\zeta =L_\infty$)

Theorem 1 from \cite{MalNa} states that if
$\varphi,\zeta,\tilde{\psi}$ are Young functions such that
(\ref{Mal_condition}) holds for all $x \in [0,\infty)$ and $Y$ is
a random variable such that $\|Y\|_{\zeta} = \infty$, then there
exists a random variable $X$, such that $\|X\|_{\tilde{\psi}} <
\infty$ and $\|XY\|_{\varphi} = \infty$. One can easily see that
the functions $\varphi,\zeta,\tilde{\psi}$ can be modified (for
small values of $x$) to Young functions such that
(\ref{Mal_condition}) holds for all $x \ge 0$. Thus there exists
$X$ satisfying the above condition. Clearly one can assume that
with probability one $X$ is a positive integer and $\p(X=1)>0$.
Consider now a Polish space $(\mathcal{S},\alpha)$ and $\tilde{f},
h\colon\mathcal{S} \to \R$ such that $(\tilde{f},h)$ is
distributed like $(Y,X)$. Let $(X_n)$ be the Markov chain given by
the construction of Section \ref{section_counterexample}. By
property (v) we have
\begin{displaymath}
\E_\nu \psi\Big(\frac{\tau+1}{a}\Big) = R \int_\mathcal{S}
\psi\Big(\frac{h(x)}{a}\Big)h(x)^{-1}\alpha(dx) =
\frac{R}{a}\E\tilde{\psi}\Big(\frac{X}{a}\Big) < \infty
\end{displaymath}
for $a$ large enough, since $\|X\|_{\tilde{\psi}} < \infty$. By
property (vi), the law of $f$ under $\pi$ is equal to the law of
$Y$. Finally, by property (vii), for every $a>0$,
\begin{displaymath}
\E_\pi\varphi\Big(\frac{|S(f)|}{a}\Big) \ge
2^{-1}\E\varphi\Big(\frac{XY}{2a}\Big) = \infty,
\end{displaymath}
which proves that $\|S(f)\|_{\pi,\varphi} = \infty$.
\end{proof}

\begin{proof}[Proof of Proposition \ref{Question_2_opt_pi}]
Let $\eta = \varphi^{-1}\circ\tilde{\psi}$. By Propositions
\ref{weak_opt_pi}, \ref{legendre_prop_pi} and Lemma
\ref{auxiliary_lemma} we have
\begin{displaymath}
\varphi\circ {\eta^\ast} \preceq \zeta \simeq \kappa \circ
\vartheta^\ast,
\end{displaymath}
where $\vartheta = \kappa^{-1}\circ\tilde{\psi}$. Thus
$(\vartheta^\ast)^{-1}\circ\kappa^{-1} \preceq
(\eta^\ast)^{-1}\circ \varphi^{-1}$. Another application of Lemma
\ref{auxiliary_lemma} together with Lemma \ref{lemma_inverses} in
the Appendix yield for some constant $C \in (1,\infty)$ and $x$
large enough,
\begin{align*}
\kappa^{-1}(x)&\le C(\vartheta^\ast)^{-1}(\kappa^{-1}(x))
\tilde{\psi}^{-1}(x) \le C^2(\eta^\ast)^{-1}(\varphi^{-1}(Cx))
\tilde{\psi}^{-1}(Cx)\\
&=  C^2
(\eta^\ast)^{-1}(\varphi^{-1}(Cx))\eta^{-1}(\varphi^{-1}(Cx)) \le
2C^2\varphi^{-1}(Cx)
\end{align*}
which implies that $\varphi \preceq \kappa$.
\end{proof}

\section{Applications\label{Section_applications}}

\subsection{Limit theorems for additive functionals}
It is well known that for a Harris ergodic Markov chain and a
function $f$, the CLT
\begin{align}\label{CLT_eq}
\frac{f(X_0)+\ldots+f(X_{n-1})}{\sqrt{n}} \stackrel{d}{\to}
\mathcal{N}(0,\sigma_f^2)
\end{align}
holds in the stationary case iff it holds for any initial
distribution.

Moreover (see \cite{Chen} and \cite{BLL}) under the assumption
that $\E_\pi f^2 < \infty$, the above CLT holds iff $\E_\nu S(f) =
0$, $\E_\nu(S(f))^2 < \infty$ and the asymptotic variance is given
by $\sigma_f^2 = \delta\pi(C)m^{-1}(\E s_{1}(f)^2 +2\E
s_1(f)s_2(f))$. If the chain has an atom, this
equivalence holds without the assumption $\E_\pi f^2 < \infty$.

It is also known (see \cite{Chen}) that the condition $\E_\pi f =
0$, $\E_\nu S(|f|)^2 < \infty$ implies the law of the iterated
logarithm
\begin{align}\label{LIL_eq}
-\sigma_f = \liminf_{n\to \infty}
\frac{\sum_{i=0}^{n-1}f(X_i)}{\sqrt{n\log\log n}} \le
\limsup_{n\to \infty}
\frac{\sum_{i=0}^{n-1}f(X_i)}{\sqrt{n\log\log n}} = \sigma_f \;
a.s.
\end{align}
Moreover for chains with an atom $\limsup_{n\to \infty}
\frac{|\sum_{i=0}^{n-1}f(X_i)|}{\sqrt{n\log\log n}}< \infty
\;a.s.$ implies the CLT (see \cite{Chen}, Theorem 2.2. and Remark
2.3).

Our results from section \ref{section_nu_thm} can be thus applied
to give optimal conditions for CLT and LIL in terms of ergodicity
of the chain (expressed by Orlicz integrability of the
regeneration time) and integrability of $f$ wrt the stationary
measure.

The following Theorem is an immediate consequence of Theorems
\ref{thm_nu}, \ref{stron_opt_nu} and Proposition
\ref{weak_opt_nu}.

\begin{thm}\label{CLT_LIL_thm} Consider a Harris ergodic Markov chain $(X_n)$ on a
Polish space $\mathcal{X}$ and a function $f\colon \mathcal{X} \to
\R$, $\E_\pi f = 0$. Let $\psi$ be a Young function such that
$\lim_{x\to 0}\psi(x)/x = 0$ and assume that $\|\tau\|_{\nu,\psi}
< \infty$. Let finally $\rho(x) = \tilde{\psi}^\ast(x^2)$, where
$\tilde{\psi}(x) = \psi(x)/x$. If $\|f\|_{\pi,\rho} <\infty$ then
the CLT (\ref{CLT_eq}) and LIL (\ref{LIL_eq}) hold.

Moreover every Young function $\tilde{\rho}$ such that
$\|f\|_{\pi,\tilde{\rho}}$ implies CLT (or LIL) for all Harris
ergodic Markov chains with $\|\tau\|_{\nu,\psi} < \infty$
satisfies $\rho\preceq \tilde{\rho}$.

If the function $x\mapsto \sqrt{\psi(x)}$ is equivalent to a Young
function then for every random variable $Y$ with $\|Y\|_{\rho} =
\infty$ one can construct a stationary Harris ergodic Markov chain
$(X_n)$ and a function $f$ such that $f(X_n)$ has the same law as
$Y$, $\|\tau\|_{\nu,\psi}< \infty$ and both (\ref{CLT_eq}) and
(\ref{LIL_eq}) fail.
\end{thm}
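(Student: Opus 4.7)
The plan is to reduce each of the three assertions to the machinery of Section \ref{section_nu_thm} applied with $\varphi(x) = x^2$. The starting observation is the computation
\begin{displaymath}
\rho_{x^2,\psi}(x) = \sup_{y > 0}\frac{x^2y^2 - \psi(y)}{y} = \sup_{y > 0}(x^2 y - \tilde\psi(y)) = \tilde\psi^\ast(x^2) = \rho(x),
\end{displaymath}
so the Young function appearing in the statement is exactly the one produced by the general theory.

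For the sufficient direction I would apply Theorem \ref{thm_nu} separately to $f$ and $|f|$, obtaining $\E_\nu S(f)^2 < \infty$ and $\E_\nu S(|f|)^2 < \infty$. Pitman's formula (\ref{mean}) together with the hypothesis $\E_\pi f = 0$ gives $\E_\nu S(f) = 0$; the elementary bound $\tilde\psi^\ast(y) \ge y - \psi(1)$ yields $\rho(x) \ge x^2 - \psi(1)$ and hence $\E_\pi f^2 < \infty$. The $L^2$ CLT criterion from \cite{Chen,BLL} then delivers (\ref{CLT_eq}), and the LIL criterion from \cite{Chen} delivers (\ref{LIL_eq}).

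For the weak optimality I would restrict attention to atomic Harris ergodic chains, which is exactly the class produced by the construction of Section \ref{section_counterexample}. For such chains the CLT is equivalent to $\E_\nu S(f) = 0$, $\E_\nu S(f)^2 < \infty$ without any a priori integrability of $f^2$, and the LIL (\ref{LIL_eq}) implies a finite $\limsup$ which by the atomic criterion of \cite{Chen} forces the CLT. Hence the assumed implication upgrades to: $\|S(f)\|_{\nu,x^2} < \infty$ whenever $\|f\|_{\pi,\tilde\rho} < \infty$ and $\|\tau\|_{\nu,\psi} < \infty$, and Proposition \ref{weak_opt_nu} (with $\varphi(x) = x^2$) gives $\rho = \rho_{x^2,\psi} \preceq \tilde\rho$.

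For the strong optimality I would note that the assumption on $\sqrt{\psi}$ is precisely the hypothesis of Theorem \ref{stron_opt_nu} specialised to $\varphi(x) = x^2$. That theorem then supplies an atomic Harris ergodic chain (Remark 3 following Theorem \ref{stron_opt_nu}) together with a function $f$ whose $\pi$-law equals that of $Y$, satisfying $\|\tau\|_{\nu,\psi} < \infty$ and $\E_\nu S(f)^2 = \infty$; by the atomic CLT equivalence this rules out (\ref{CLT_eq}), and then (\ref{LIL_eq}) must also fail, as otherwise the atomic-chain implication LIL $\Rightarrow$ CLT from \cite{Chen} would produce a contradiction. The main bookkeeping to be careful about is that both optimality statements rely on the reverse implication CLT $\Rightarrow$ $\E_\nu S(f)^2 < \infty$, which is only sharp in the atomic setting; this is handled automatically because the counterexample chains supplied by Theorem \ref{stron_opt_nu} and by the proof of Proposition \ref{weak_opt_nu} are atomic by construction (property (i) of Section \ref{section_counterexample}).
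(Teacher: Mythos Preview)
Your proposal is correct and follows exactly the route the paper indicates: the paper states only that the theorem ``is an immediate consequence of Theorems \ref{thm_nu}, \ref{stron_opt_nu} and Proposition \ref{weak_opt_nu}'', and you correctly unpack this by identifying $\rho = \rho_{x^2,\psi}$, invoking Theorem \ref{thm_nu} (applied to both $f$ and $|f|$) together with the CLT/LIL criteria of \cite{Chen,BLL} for sufficiency, and using Proposition \ref{weak_opt_nu} and Theorem \ref{stron_opt_nu} for the two optimality clauses. Your explicit observation that the counterexample chains are atomic---so that the reverse implications CLT $\Rightarrow \E_\nu S(f)^2<\infty$ and LIL $\Rightarrow$ CLT from \cite{Chen} are available---is exactly the missing link the paper leaves implicit.
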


\paragraph{Remark} As noted in \cite{Jo} in the case of geometric ergodicity, i.e. when
$\psi(x) = \exp(x) - 1$, the CLT part of the above theorem can be
obtained from results in \cite{DMR}, giving very general and
optimal conditions for CLT under $\alpha$-mixing. The
integrability condition for $f$ is in this case $\E_\pi
f^2\log_+(|f|) < \infty$. The sufficiency for the LIL part can be
similarly deduced from \cite{RioLil}.

The equivalence of the exponential decay of mixing coefficients
with ergodicity of Markov chains (measured in terms of $\psi$)
follows from \cite{NT1,NT2}. Optimality of the condition follows
from examples given in \cite{Bra2}. Examples of geometrically
ergodic Markov chains and a function $f$ such that $\E f^2<\infty$
and the CLT fails have been also constructed in \cite{Hag,Bra1}.
Let us point out that if the Markov chain is reversible and
geometrically ergodic, then $\|f\|_{\pi,2} < \infty$ implies the
CLT and thus also $\E_\nu S(f)^2 < \infty$. Thus under this
additional assumptions our formulas for $\psi(x) = \exp(x)-1$ and
$\rho(x) = x^2$ are no longer optimal (our example from Section
\ref{section_counterexample} is obviously non-reversible). It
would be of interest to derive counterparts of theorems from
Section \ref{Section_main} under the assumption of reversibility.

It is possible that in a more general case Theorem
\ref{CLT_LIL_thm} can also be recovered from the above results, by
proper characterizations of ergodicity in terms of mixing and
characterizations of Orlicz spaces in terms of some weighted
inequalities involving the tail of the function. However we have
not attempted to do this in full generality (we have only verified
that such an approach works in the case of $\psi(x) =x^p$).

Let us also remark that to our best knowledge, so far there has
been no 'regeneration' proof of Theorem \ref{CLT_LIL_thm} even in
the case of geometric ergodicity.

\paragraph{Berry-Esseen type theorems}
Similarly we can use a result by Bolthausen \cite{Bo1,Bo2} to
derive Berry-Esseen type bounds for additive functionals of
stationary chains. More specifically Lemma 2 in \cite{Bo2}, together with Theorem \ref{thm_nu} give

\begin{thm} Let $(X_n)$ be a stationary strongly aperiodic Harris ergodic Markov chain on $\mathcal{X}$, such that $\|\tau\|_{\nu,\psi}<\infty$, where $\psi$ is a Young function
satisfying $(x\mapsto x^3) \preceq \psi$ and $\lim_{x\to0}\psi(x)/x
= 0$. Let $\rho = \Psi^\ast(x^3)$, where $\Psi(x)
= \psi(\sqrt{x})/\sqrt{x}$. Then for every $f\colon \mathcal{X}\to \R$ such that
$\|f\|_{\pi,\rho} < \infty$ and $\sigma_f^2 := \E(S(f))^2 > 0$ we
have
\begin{displaymath}
\Big|\p\Big( \frac{\sum_{i=0}^{n-1} f(X_i)-\E_\pi
f}{\sigma_f\sqrt{n}}\Big) - \Phi(x)\Big| = \mathcal{O}(n^{-1/2}),
\end{displaymath}
where $\Phi(x) = (2\pi)^{-1/2}\int_{-\infty}^x\exp(-y^2/2)dy$.

\end{thm}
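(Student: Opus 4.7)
The plan is to reduce the statement to Bolthausen's Berry--Esseen result for sums of i.i.d.\ regeneration blocks (Lemma~2 in \cite{Bo2}), whose hypotheses will be verified by means of Theorem~\ref{thm_nu}. Bolthausen's lemma gives the rate $\mathcal{O}(n^{-1/2})$ provided that both the block length $T$ and the (recentred) block sum $s_1(f)$ have finite third moments and the asymptotic variance is positive. Since the chain is strongly aperiodic, the splitting is carried out with $m=1$ and for $i\ge 1$ the blocks $s_i(f)$ are genuinely i.i.d., which matches exactly the setting of Bolthausen's lemma.

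The first step will be to check $\E T^3<\infty$. Recalling that $T$ has the same law as $\tau+1$ under $\p_\nu$ and using the assumption $(x\mapsto x^3)\preceq\psi$, the standard inclusion of Orlicz spaces gives
\begin{equation*}
\|\tau+1\|_{\nu,\,x^3}\lesssim\|\tau+1\|_{\nu,\psi}<\infty,
\end{equation*}
so $\E T^3<\infty$.

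The second step, where Theorem~\ref{thm_nu} comes in, is to deduce $\E|s_1(f)|^3<\infty$ from $\|f\|_{\pi,\rho}<\infty$. Applying that theorem with $\varphi(x)=x^3$ requires computing the generalized Young function $\rho_{\varphi,\psi}$. The substitution $z=y^2$ yields
\begin{equation*}
\rho_{\varphi,\psi}(x)=\sup_{y\ge 0}\frac{x^3y^3-\psi(y)}{y}=\sup_{z\ge 0}\bigl(x^3 z-\Psi(z)\bigr)=\Psi^\ast(x^3),
\end{equation*}
which coincides with the function $\rho$ of the statement. Consequently Theorem~\ref{thm_nu} gives
\begin{equation*}
\|S(f)\|_{\nu,\,x^3}\le 2\|\tau+1\|_{\nu,\psi}\|f\|_{\pi,\rho}<\infty,
\end{equation*}
i.e.\ $\E|s_1(f)|^3<\infty$ (the mean $\E_\pi f$ that has to be subtracted from $f$ before applying Bolthausen's lemma alters $\|f\|_{\pi,\rho}$ only by a constant multiple of $|\E_\pi f|$).

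The only nontrivial point of the argument is the identification $\rho_{\varphi,\psi}=\Psi^\ast(\cdot^{3})$; once it has been verified and the two third-moment bounds are in place, the conclusion follows by a direct invocation of Bolthausen's Lemma~2 applied to the i.i.d.\ blocks $(s_i(f))_{i\ge 1}$, the initial and final incomplete blocks being absorbed into the $\mathcal{O}(n^{-1/2})$ error term by a standard truncation argument that relies on $\E T^3<\infty$.
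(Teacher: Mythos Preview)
Your proposal is correct and follows exactly the route indicated by the paper, which simply states that the theorem follows from Bolthausen's Lemma~2 in \cite{Bo2} combined with Theorem~\ref{thm_nu}. Your explicit identification $\rho_{\varphi,\psi}(x)=\sup_{y\ge0}\bigl(x^3y^2-\psi(y)/y\bigr)=\Psi^\ast(x^3)$ via the substitution $z=y^2$ is precisely the missing detail that links Theorem~\ref{thm_nu} (with $\varphi(x)=x^3$) to the third-moment hypothesis of Bolthausen's lemma.
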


\subsection{Tail estimates}

The last application we develop concerns tail inequalities for
additive functionals. The approach we take is by now fairly
standard (see e.g. \cite{Cle,DGM,BerCle,AdMarkovTail,AdBed,LMN_MCMC,LMN_MCQMC}) and relies
on splitting the additive functional into a sum of independent (or
one-dependent blocks) and using inequalities for sums of
independent random variables. Our results on Orlicz integrability
imply inequalities for the chain started from the small measure
(an atom) or from the stationary distribution. The former case may
have potential applications in MCMC algorithms in situations when
small measure is known explicitly and one is able to sample from
it.

In what follows we denote $\psi_\alpha = \exp(x^\alpha) -
1$.

\begin{thm}\label{conc_exp} Let $(X_n)_{n\ge 0}$ be a Harris ergodic Markov chain on $\mathcal{X}$.
Assume that
$\|\tau\|_{\nu,\psi_\alpha} < \infty$ for some $\alpha \in(0,1)$.
Let $f\colon\mathcal{X}\to \R$ be a measurable function, such that
$\E_\pi f = 0$. If $\|f\|_{\pi,\psi_\beta} < \infty$ for some $\beta
>0$, then for all $t\ge 0$,

\begin{align*}
&\p_\nu(|f(X_0)+\ldots+f(X_{n-1})| \ge t) \nonumber \\
&\le K\exp\Big(-\frac{t^2}{Kn\delta \pi(C)\E_\nu S(f)^2}\Big) +
K\exp\Big(-\frac{t}{K\|f\|_{\pi,\psi_\beta}\|\tau+1\|_{\nu,\psi_\alpha}^3}\Big)\\
&+
K\exp\Big(\frac{t^{\gamma}}{K(\|f\|_{\pi,\psi_\beta}\|\tau+1\|_{\nu,\psi_\alpha})^\gamma\log
n}\Big)
\end{align*}
and
\begin{align*}
&\p_\pi(|f(X_0)+\ldots+f(X_{n-1})| \ge t) \nonumber \\
&\le K\exp\Big(-\frac{t^2}{Kn\delta \pi(C)\E_\nu S(f)^2}\Big) +
K\exp\Big(-\frac{t}{K\|f\|_{\pi,\psi_\beta}\|\tau+1\|_{\nu,\psi_\alpha}^3}\Big)\\
&+
K\exp\Big(\frac{t^\gamma}{K(\|f\|_{\pi,\psi_\beta}\|\tau+1\|_{\nu,\psi_\alpha})^\gamma
\log(\|\tau+1\|_{\nu,\psi_\alpha})}\Big) +
K\exp\Big(\frac{t^{\gamma}}{K(\|f\|_{\pi,\psi_\beta}\|\tau+1\|_{\nu,\psi_\alpha})^\gamma\log
n}\Big),
\end{align*}
where $\gamma = \frac{\alpha\beta}{\alpha+\beta}$ and $K$ depends
only on $\alpha,\beta$ and $m$ in the formula (\ref{small_set}).
\end{thm}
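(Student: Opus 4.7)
The plan is to exploit the regeneration structure to reduce the problem to a Fuk--Nagaev type concentration inequality for independent summands. The first ingredient is the block-level Orlicz bound: applying Corollary \ref{question_2_cor_nu} with $\psi = \psi_\alpha$ and $\rho = \psi_\beta$ (see Example 5 immediately after the corollary) gives
$$\|s_i(f) - \E s_i(f)\|_{\psi_\gamma} \le K\|\tau+1\|_{\nu,\psi_\alpha}\|f\|_{\pi,\psi_\beta}, \qquad \gamma = \frac{\alpha\beta}{\alpha+\beta},$$
and formula (\ref{mean}) together with $\E_\pi f = 0$ shows that $\E s_i(f) = 0$. The regeneration spacings $T_i = \tau(i) - \tau(i-1)$ form (for $i \ge 1$) an i.i.d.\ sequence with $\|T_i\|_{\psi_\alpha} < \infty$, and by Pitman's formula $\E T_i = (m\delta\pi(C))^{-1}$ times the appropriate factor.

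Next I would split
$$\sum_{i=0}^{n-1} f(X_i) = I + \sum_{i=1}^{N} s_i(f) + F,$$
where $I$ is the initial segment (under $\p_\nu$ it coincides with $s_0(f)$, which has the same $\psi_\gamma$-norm as the other blocks), $F$ is the unfinished final block, and $N$ is the random number of completed blocks before time $n$. A standard large-deviation estimate for partial sums of i.i.d.\ $\psi_\alpha$-variables shows that $|N - nm\delta\pi(C)|$ is small with probability $\ge 1 - K\exp(-c(n/\|\tau+1\|_{\nu,\psi_\alpha})^\alpha)$, which since $\gamma < \alpha$ is dominated by the third term of the stated bound; the extreme initial and final blocks $I, F$ contribute a $\psi_\gamma$ tail that is likewise absorbed.

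For the main sum $\sum_{i=1}^{l} s_i(f)$ with $l \asymp nm\delta\pi(C)$, the one-dependence (for $m > 1$) is handled by splitting into odd and even indexed subsequences, each i.i.d. I would then apply a Fuk--Nagaev inequality for i.i.d.\ centered random variables of bounded $\psi_\gamma$-norm: truncating each $s_i(f)$ at a level $M$ of order $\|f\|_{\pi,\psi_\beta}\|\tau+1\|_{\nu,\psi_\alpha}^3$ and using Bernstein's inequality on the truncated part produces the Gaussian term $\exp(-ct^2/(l\E s_1(f)^2))$ and the linear Bernstein term $\exp(-ct/M)$ (matching the second summand of the claim), while the union bound over the $l \lesssim n$ events $\{|s_i(f)| > M\}$ gives the $\psi_\gamma$ tail $l\exp(-(M'/\|s_1(f)\|_{\psi_\gamma})^\gamma)$, yielding the $\log n$ factor in the third summand after reabsorbing $\log l$ into the exponent. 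Rewriting $l \E s_1(f)^2$ via Pitman's formula (\ref{Pitman_formula}) produces the expression $n\delta\pi(C)\E_\nu S(f)^2$ in the Gaussian denominator. For the stationary case, the initial segment is size-biased, so one either invokes (\ref{Pitman_formula}) directly or uses the change-of-measure $\p_\pi(A) \le \delta\pi(C)\E_\nu \sum_{i=0}^\tau \mathbf{1}_A$ on events depending on the trajectory after time $0$; the size-biasing shifts one power of $\|\tau+1\|_{\nu,\psi_\alpha}$ in the associated $\psi_\gamma$-tail and is responsible for the extra term $\exp(-t^\gamma/K(\|f\|_{\pi,\psi_\beta}\|\tau+1\|_{\nu,\psi_\alpha})^\gamma\log\|\tau+1\|_{\nu,\psi_\alpha})$.

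The main technical obstacle is the bookkeeping of constants when optimizing the truncation level $M$: one has to balance the Bernstein term and the $\psi_\gamma$ tail to obtain the stated split of three terms with the right dependence on $\log n$, and to identify the precise power $\|\tau+1\|_{\nu,\psi_\alpha}^3$ in the denominator of the linear term (which arises from pairing the truncation scale for $s_i(f)$ with a second-moment computation that itself involves $\|\tau+1\|_{\nu,\psi_\alpha}$ through $\E s_1(f)^2 \lesssim (\|f\|_{\pi,\psi_\beta}\|\tau+1\|_{\nu,\psi_\alpha})^2$). A secondary difficulty is verifying that the large-deviation bound for $N$ and the single-block $\psi_\gamma$ tail contributions can indeed be absorbed into the three terms of the stated inequality; this reduces to comparing $\psi_\alpha$ and $\psi_\gamma$ tails for a fixed choice of truncation, and follows since $\gamma < \alpha$.
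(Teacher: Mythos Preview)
Your overall strategy---block decomposition, the $\psi_\gamma$-bound on $s_i(f)$ via Corollary~\ref{question_2_cor_nu}, truncation and Bernstein---is exactly the paper's. The gap is in how you dispose of the fluctuation of the random number of blocks $N$. You assert that the large-deviation error $k(n/m)\lesssim\exp\bigl(-c(n/\|\tau+1\|_{\nu,\psi_\alpha})^\alpha\bigr)$ is ``dominated by the third term of the stated bound, since $\gamma<\alpha$''. This is false: $k(n/m)$ does not depend on $t$, while every summand on the right-hand side of the theorem tends to~$0$ as $t\to\infty$; for $t$ large relative to~$n$, none of the three terms can absorb it. The paper handles this by keeping two estimates on the middle sum $II$: the Bernstein bound on the truncated part (which carries the sharp variance $n\delta\pi(C)\E_\nu S(f)^2$ but also the error $k(n/m)$), and a direct L\'evy-type maximal inequality using only $N\le n/m$ (which has no $k(n/m)$ term but carries the cruder subgaussian coefficient $n\|s_1(f)\|_{\psi_\gamma}^2/m$). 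In the regime where $k(n/m)$ is not absorbed, one checks that the cruder subgaussian exponent satisfies $t^2 m/(n\|f\|_{\pi,\psi_\beta}^2\|\tau+1\|_{\nu,\psi_\alpha}^2)\ge t/(\|f\|_{\pi,\psi_\beta}\|\tau+1\|_{\nu,\psi_\alpha}^3)$, and \emph{this} case analysis is the actual origin of the $\|\tau+1\|_{\nu,\psi_\alpha}^3$ in the second summand---not a truncation level or a second-moment pairing as you suggest.

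A related point: the truncation is not at a fixed level of order $\|f\|_{\pi,\psi_\beta}\|\tau+1\|_{\nu,\psi_\alpha}^3$. The paper truncates at $a\asymp\|s_1(f)\|_{\psi_\gamma}\log^{1/\gamma}n$, so that the tail of the untruncated part becomes $\exp(-c(t/a)^\gamma)$ and produces the $\log n$ in the third summand. With your $n$-independent level $M$, a union bound over $\{|s_i(f)|>M\}$ gives $n\exp(-(M/\|s_1(f)\|_{\psi_\gamma})^\gamma)$, which has neither the right $t$-dependence nor the $\log n$ factor; the appearance of a second symbol $M'$ in your sketch does not repair this. The stationary case is treated as you outline, via Theorem~\ref{thm_pi} and the remark following it, and that part of your proposal is fine.
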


\paragraph{Remarks}
\begin{enumerate}
\item The proof of the above theorem is similar to those presented
in \cite{AdMarkovTail,AdBed}, therefore we will present only a
sketch.

\item When $m=1$, $\delta\pi(C)\E_\nu S(f)^2$ is the variance of
the limiting Gaussian distribution for the additive functional.

\item If one does not insist on having the limiting variance in
the case $m=1$ as the subgaussian coefficient and instead
replaces it by $\E_\nu S(f)^2$, one can get rid of the second
summand on the right hand sides of the estimates (i.e. the
summand containing $\|\tau+1\|^3$).

\item One can also obtain similar results for suprema of empirical
processes of a Markov chain (or equivalently for additive
functionals with values in a Banach space). The difference is that
one obtains then bounds on deviation above expectation and not
from zero. A proof is almost the same, it simply requires a
suitable generalization of an inequality for real valued summands,
relying on the celebrated Talagrand's inequality and an additional
argument to take care of the expectation. Since our goal is rather
to illustrate the consequences of results from Section
\ref{Section_main}, than to provide the most general inequalities,
we do not state the details and refer the reader to
\cite{AdMarkovTail,AdBed} for the special case of geometrically
ergodic Markov chains. For the same reason we will not try to
evaluate constants in the inequalities.

\item In a similar way one can obtain tail estimates in the polynomial case (i.e. when the regeneration time and/or the function $f$ are only polynomially integrable). One just needs to use the other examples that have been discussed in Section \ref{Section_main}. The estimate of the bounded part (after truncation) comes again from Bernstein's inequality, whereas the unbounded part can be handled with the Hoffman-Joergensen inequality (or its easy modifications for functions of the form $x\mapsto x^p/(\log^\beta x)$), just as e.g. in \cite{EinmahlLi}.
\end{enumerate}
\begin{proof}[Proof of Theorem \ref{conc_exp}]
Below we will several times use known bounds for sums of
independent random variables in the one-dependent case. Clearly it
may be done at the cost of worsening the constants by splitting
the sum into sums of odd and even terms, therefore we will just
write the final result without further comments. In the proof we
will use the letter $K$ to denote constants depending on
$\alpha,\beta$. Their values may change from one occurrence to
another.

Setting $N = \inf\{i\colon m\tau(i) + m -1 \ge n-1\}$ we may write
\begin{align*}
|f(X_0) + \ldots + f(X_{n-1})| &= \sum_{i=0}^{(m\tau(0)+m-1)}
|f(X_i)| +
|\sum_{i=1}^{N} s_i(f)| + \sum_{i=n}^{m\tau(N)+m-1} |f(X_i)|\\
&=: I + II + III,
\end{align*}
where each of the sums on the right hand side may be interpreted
as empty.

The first and last terms can be taken care of by Chebyshev's
inequalities corresponding to proper Orlicz norms, using estimates
of Corollaries \ref{question_2_cor_nu} and \ref{question_2_cor_pi}
(note that $\p(III \ge t) \le \p(I\ge t) + n\p(|s_1(f)|\ge t)$).

We will consider only the case of the chain started from $\nu$.
The stationary case is similar, simply to bound $I$ we use the
estimates of Orlicz norms for the chain started from $\pi$ given
in Theorem \ref{thm_pi} (together with the remark following it to
get better dependence on $\|\tau+1\|_{\nu,\psi_\alpha}$).

 By
Corollary \ref{question_2_cor_nu} and examples provided in Section \ref{Section_main}, $\|s_i(f)\|_\varphi =
\|S(f)\|_{\nu,{\psi_\gamma}} <
K\|\tau+1\|_{\nu,\psi_\alpha}\|f\|_{\pi,\psi_\beta}$. Thus
\begin{align}\label{I_i_III}
\p(I \ge t) + \p(III \ge t) \le
2n\exp\Big(\Big(-\frac{t}{K\|\tau+1\|_{\nu,\psi_\alpha}\|f\|_{\pi,\psi_\beta}}\Big)^\gamma\Big).
\end{align}

The second term can be split into $II_1 + II_2$, where
\begin{align*}
II_1& =|\sum_{i=1}^N (s_i(f)\ind{|s_i(f)|\le a} - \E
s_i(f)\ind{|s_i(f)|\le a})|, \\
II_2 & = |\sum_{i=1}^N (s_i(f)\ind{|s_i(f)|> a} - \E
s_i(f)\ind{|s_i(f)|> a})|.
\end{align*}

Setting $a = K_{\alpha,\beta} \max_{i\le n}
\|s_i(f)\|_{\psi_\gamma}\log^{1/\gamma} n \le K_{\alpha,\beta} m
\|f\|_{\pi,\psi_\beta}\|\tau+1\|_{\psi_\alpha}\log^{1/\gamma} n$,
we can proceed as in \cite{AdBed} to get
\begin{align}\label{II_2}
\p(II_2 \ge t) \le 2\exp\Big(\Big(-\frac{t}{K_{\alpha,\beta}
a}\Big)^\gamma\Big).
\end{align}
 It remains to bound the term $I_1$. Introduce the variables
$T_i = \tau(i) - \tau(i-1)$, $i\ge 1$ and note that $\E T_i =
\delta^{-1}\pi(C)^{-1}$. For $4nm^{-1}\pi(C)\delta \ge 2$, we have
\begin{align*}
\p(N \ge 4nm^{-1}\pi(C)\delta) &\le \p\Big(
\sum_{i=1}^{\lfloor 4nm^{-1}\pi(C)\delta\rfloor} T_i \le n/m\Big)\\
&= \p\Big( \sum_{i=1}^{\lfloor 4nm^{-1}\pi(C)\delta\rfloor} (T_i -
\E T_i) \le n/m - 2nm^{-1}\pi(C)\delta \E T_i\Big)\\
& = \p\Big( \sum_{i=1}^{\lfloor 4nm^{-1}\pi(C)\delta\rfloor} (T_i
- \E T_i) \le - n/m\Big)\\
& \le k(n/m),
\end{align*}
where $k(t) =
K_\alpha\exp(-K_\alpha^{-1}\min(t^2m/n\|\tau+1\|_{\nu,\psi_\alpha}^2,(t/\|\tau+1\|_{\nu,\psi_\alpha})^\alpha))$.
The bound follows for $\alpha = 1$ from
Bernstein's $\psi_1$ inequality and for $\alpha < 1$ from results
in \cite{HOMS} (as shown in \cite{ALPT2}).

Note that if $4nm^{-1}\pi(C)\delta < 2$, then
\begin{align*}
k(n/m) &\ge \exp(-K_\alpha n m^{-1} \|\tau+1\|_{\nu,\psi_\alpha}^{-2}) \ge
\exp(-K_\alpha n m^{-1} (\E \tau+1)^{-2}) \\
&= \exp(-K_\alpha nm^{-1}\delta\pi(C)) \ge \exp(-K_\alpha/2).
\end{align*}
Thus the above tail estimate for $N$ remains true (after adjusting the constant $K_\alpha$).

Thus by Bernstein's bounds on suprema of partial sums of  a
sequence of independent random variables we get
\begin{align}\label{II_11}
\p(II_1 \ge t) &\le \p(II_1 \ge t \; \&\; N \le
4nm^{-1}\pi(C)\delta) + k(n/m) \nonumber \\
&\le
K\exp\Big(-\frac{1}{K}\min\Big(\frac{t^2}{nm^{-1}\pi(C)\delta\E_\nu
S(f)^2},\frac{t}{\|f\|_{\pi,\psi_\beta}\|\tau\|_{\psi_\alpha}}\Big)\Big)
+ k(n/m),
\end{align}

On the other hand by $N \le n/m$, the same
inequalities we used to derive the function $k$ and Levy type
inequalities (like in \cite{AdMarkovTail}), we obtain
\begin{align}\label{II_12}
\p(II \ge t) \le h(t),
\end{align}
where $h(t) =
K_\gamma\exp(-K_\gamma^{-1}
\min(t^2m/n(\|f\|_{\pi,\psi_\beta}\|\tau+1\|_{\nu,\psi_\alpha})^2,(t/\|f\|_{\pi,\psi_\beta}\|\tau+1\|_{\nu,\psi_\alpha})^\gamma))$

Now if $k(n/m) \ge
K\exp(-(t/K\|f\|_{\pi,\psi_\beta}\|\tau+1\|_{\nu,\psi_\alpha})^\alpha)$,
then
\begin{displaymath}
K\exp\Big(-\Big(\frac{t}{K\|f\|_{\pi,\psi_\beta}\|\tau+1\|_{\psi_\alpha}}\Big)^\alpha\Big)
\le k(n/m) \le
K\exp\Big(-\Big(\frac{n}{Km\|\tau+1\|_{\nu,\psi_\alpha}^2}\Big)^\alpha\Big)
\end{displaymath}
and so
\begin{align*}
\frac{t^2m}{n(\|f\|_{\pi,\psi_\beta}\|\tau+1\|_{\nu,\psi_\alpha})^2}
\ge
\frac{t}{\|\tau+1\|_{\nu,\psi_\alpha}^3\|f\|_{\pi,\psi_\beta}},
\end{align*}
which ends the proof by (\ref{II_12}).
\end{proof}

\section*{Appendix. Some generalities on Orlicz Young functions and
Orlicz spaces}

All the lemmas presented below are standard facts from the theory
of Orlicz spaces, we present them here for the reader's
convenience.

\begin{lemma}\label{general_duality}
If $\varphi$ is a Young function then $X \in L_\varphi$ if and
only if $\E |XY| < \infty$ for all $Y$ such that $\E
\varphi^\ast(Y) \le 1$. Moreover the norm $\|X\| = \sup\{\E XY
\colon \E\varphi^\ast(Y) \le 1\}$ is equivalent to
$\|X\|_{\varphi}$.
\end{lemma}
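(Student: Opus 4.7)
The plan is to establish the two-sided comparison $\|X\|_\varphi \le \|X\| \le 2\|X\|_\varphi$; the ``iff'' statement then follows by combining the comparison with a closed graph argument.

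First I would handle the easy upper bound $\|X\| \le 2\|X\|_\varphi$ via Young's inequality $ab \le \varphi(a) + \varphi^\ast(b)$. Setting $c = \|X\|_\varphi$, $a = |X|/c$ and $b = |Y|$, pointwise substitution and integration give $\E|XY|/c \le \E\varphi(|X|/c) + \E\varphi^\ast(|Y|) \le 2$ whenever $\E\varphi^\ast(|Y|) \le 1$. Taking supremum over such $Y$ yields the claimed bound.

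The harder direction $\|X\|_\varphi \le \|X\|$ I would prove by exhibiting a near-extremal dual function via the equality case of Young's inequality. Given $c$ with $\E\varphi(|X|/c) > 1$, monotone convergence supplies $n$ large enough that $\E\varphi(|X|/c)\ind{|X|\le cn} > 1$. Setting $Y = \varphi'(|X|/c)\,\mathrm{sgn}(X)\ind{|X|\le cn}$ (taking a measurable selection of the subdifferential when $\varphi$ is not differentiable) makes $Y$ bounded, so $\beta := \E\varphi^\ast(|Y|) < \infty$. The Fenchel--Young equality $t\varphi'(t) = \varphi(t) + \varphi^\ast(\varphi'(t))$, integrated on $\{|X|\le cn\}$, gives
\begin{displaymath}
\E|XY|/c \;=\; \E\varphi(|X|/c)\ind{|X|\le cn} + \beta \;\ge\; 1 + \beta.
\end{displaymath}
If $\beta \le 1$ then $Y$ itself is admissible and $\|X\| \ge \E|XY| \ge c$; if $\beta > 1$, convexity of $\varphi^\ast$ combined with $\varphi^\ast(0) = 0$ yields $\varphi^\ast(y/\beta) \le \varphi^\ast(y)/\beta$, so $Y/\beta$ is admissible and $\|X\| \ge \E|XY|/\beta \ge c(1+\beta)/\beta \ge c$. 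Letting $c \uparrow \|X\|_\varphi$ finishes the comparison.

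Finally, for the ``iff'', the forward implication is immediate from the upper bound. For the reverse, if $\E|XY| < \infty$ for every $Y$ with $\E\varphi^\ast(|Y|) \le 1$, then the linear map $T\colon Y \mapsto XY$ sends the Luxemburg unit ball of $L_{\varphi^\ast}$ into $L^1$; the closed graph theorem, exploiting that convergence in $L_{\varphi^\ast}$ or in $L^1$ implies almost sure convergence along a subsequence, produces $\|X\| < \infty$, and then the lower bound yields $\|X\|_\varphi < \infty$. The main delicate step in the whole argument is the rescaling when $\beta > 1$: because $\varphi^\ast$ is not homogeneous one cannot simply normalize $Y$ linearly, and the argument must lean on subadditivity of $\varphi^\ast$ at zero together with the precise constant $1 + \beta$ produced by Young's equality.
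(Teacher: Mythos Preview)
The paper does not actually prove this lemma: it is listed in the Appendix under the heading ``All the lemmas presented below are standard facts from the theory of Orlicz spaces, we present them here for the reader's convenience,'' with no argument supplied. So there is nothing to compare against on the paper's side.

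Your argument is the standard one and is correct. The upper bound $\|X\|\le 2\|X\|_\varphi$ via Young's inequality is routine. For the lower bound you correctly exploit the equality case $t\varphi'(t)=\varphi(t)+\varphi^\ast(\varphi'(t))$ on a truncated set, and the rescaling step when $\beta>1$ is handled properly: convexity and $\varphi^\ast(0)=0$ give $\varphi^\ast(y/\beta)\le\varphi^\ast(y)/\beta$, and the crucial point that $\E|XY|/c\ge 1+\beta$ (not merely $\ge\beta$) is exactly what makes $Y/\beta$ do the job. The closed graph argument for the converse direction is also fine, once one notes that the hypothesis extends from $\{\E\varphi^\ast(|Y|)\le1\}$ to all of $L_{\varphi^\ast}$ by scaling. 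One small cosmetic point: the norm $\|X\|$ in the statement is written as $\sup\{\E XY:\E\varphi^\ast(Y)\le1\}$, so in the lower bound you are implicitly using that the sign of $Y$ can be chosen to match $X$; this is harmless but worth a half-sentence.
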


The next lemma is a modification of Lemma  5.4. in \cite{MS}. In the original formulation it concerns the notion of equivalence of functions (and not asymptotic equivalence relevant in our probabilistic setting). One can however easily see that the proof from \cite{MS} yields the version stated below.

\begin{lemma}\label{asymptotic_convexity_MS} Consider two increasing continuous functions $F,G \colon
[0,\infty) \to [0,\infty)$ with $F(0) = G(0) = 0$, $F(\infty)=
G(\infty) = \infty$. The following conditions are equivalent
\begin{itemize}
\item[(i)] $F \circ G^{-1}$ is equivalent to a Young function.

\item[(ii)] There exist positive constants $C,x_0$ such that
\begin{displaymath}
F\circ G^{-1}(sx) \ge C^{-1}sF\circ G^{-1}(x)
\end{displaymath}
for all $s \ge 1$ and $x \ge x_0$.

\item[(iii)] There exist positive constants $C,x_0$ such that
\begin{displaymath}
\frac{F(sx)}{F(x)} \ge C^{-1}\frac{G(sx)}{G(x)}
\end{displaymath}
for all $s\ge 1$, $x\ge x_0$.
\end{itemize}
\end{lemma}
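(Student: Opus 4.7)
The plan is to reduce to a single function $H := F \circ G^{-1}$, observe that condition (ii) is simply the \emph{quasi-monotonicity} of $H(x)/x$ on $[x_0, \infty)$, and then handle the three implications using standard Orlicz-space techniques. Dividing the inequality in (ii) by $sx$ and setting $y = sx$, condition (ii) is equivalent to $H(y)/y \ge C^{-1} H(x)/x$ for all $y \ge x \ge x_0$, which will be the working form throughout.

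For (ii)$\iff$(iii), I would use the substitution $z = G(y)$, $t = G(sy)/G(y)$: then $t \ge 1$ by monotonicity of $G$, $tz = G(sy)$, $H(z) = F(y)$, $H(tz) = F(sy)$, and the map $(s,y) \mapsto (t,z)$ covers $\{t \ge 1\} \times [z_0,\infty)$ once $x_0$ is chosen appropriately. Under this change of variables, (iii) becomes exactly $H(tz) \ge C^{-1} t H(z)$, i.e.\ (ii). For (i)$\implies$(ii), let $\Phi$ be a Young function equivalent to $H$ with constants $A \ge 1$ and some threshold $x_1$; convexity of $\Phi$ together with $\Phi(0) = 0$ yields $\Phi(sy) \ge s\Phi(y)$ for $s \ge 1$, and chaining this with the two one-sided estimates $H(x) \le A\Phi(Ax)$ and $\Phi(x) \le A H(Ax)$ gives $H(sx) \ge A^{-2} s H(x/A^2)$; rescaling $x \mapsto A^2 x$ and treating $s \in [1, A^2]$ trivially by monotonicity of $H$ produces (ii) with a worse constant.

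The main content lies in (ii)$\implies$(i), where one must actually \emph{produce} a Young function equivalent to $H$. I would set $q(t) = \sup_{x_0 \le u \le t} H(u)/u$ for $t \ge x_0$, extend $q$ linearly on $[0, x_0]$ with $q(0) = 0$, and define $\Phi(x) = \int_0^x q(u)\, du$. Since $q$ is non-decreasing and positive on $(0,\infty)$ (for suitably large $x_0$), $\Phi$ is convex, strictly increasing, and vanishes at $0$, hence a Young function. The quasi-monotonicity bound $q(x) \le C H(x)/x$ gives $\Phi(x) \le x q(x) \le C H(x)$, while $\Phi(x) \ge \int_{x/2}^x q(u)\, du \ge (x/2) q(x/2) \ge H(x/2)$ for $x \ge 2x_0$ gives $H(y) \le \Phi(2y)$; together these yield $H \simeq \Phi$, proving (i).

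The main technical point is the construction in (ii)$\implies$(i), where one has to check that the integral-based $\Phi$ is a Young function in the paper's strict sense and that the asymptotic (rather than pointwise) nature of the equivalence poses no problem. The remark preceding the lemma indicates that the original Maurey--Schechtman argument from Lemma 5.4 of \cite{MS} yields the present asymptotic form with only cosmetic changes, so at that point I would invoke their proof and explicitly note the few lines needed to accommodate the asymptotic setting.
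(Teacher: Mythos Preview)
Your argument is correct and self-contained: the substitution for (ii)$\iff$(iii) is clean, the chaining for (i)$\Rightarrow$(ii) works as you describe, and the integral construction $\Phi(x)=\int_0^x q(u)\,du$ with $q(t)=\sup_{x_0\le u\le t}H(u)/u$ is the standard way to manufacture a Young function from a quasi-monotone $H(x)/x$; your two-sided bounds $\Phi(x)\le Cx\cdot H(x)/x=CH(x)$ and $\Phi(x)\ge (x/2)q(x/2)\ge H(x/2)$ give the required equivalence. The paper itself does not prove this lemma at all: it simply states that the result is a modification of Lemma~5.4 in \cite{MS} and that the proof there carries over to the asymptotic setting, so you are in fact supplying more than the paper does. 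One small correction: the reference \cite{MS} is Montgomery-Smith, not Maurey--Schechtman.
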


\begin{lemma}\label{lemma_inverses}
For any Young function $\psi$ such that $\lim_{x\to \infty} \psi(x)/x = \infty$ and any $x \ge 0$,
\begin{displaymath}
x \le (\psi^\ast)^{-1}(x)\psi^{-1}(x) \le 2x.
\end{displaymath}
Moreover the right hand side inequality holds for any strictly
increasing function $\psi\colon [0,\infty) \to [0,\infty)$ with
$\psi(0) = 0$, $\psi(\infty) = \infty$, $\lim_{x\to \infty} \varphi(x)/x = \infty$.
\end{lemma}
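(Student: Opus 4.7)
The proof will split cleanly into the upper bound (which is more general) and the lower bound (which needs convexity of $\psi$).

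For the upper bound, I will use only the defining identity $\psi^\ast(v) = \sup_{t\ge 0}(tv - \psi(t))$, from which the Young-type inequality $uv \le \psi(u) + \psi^\ast(v)$ holds for all $u,v \ge 0$ without any convexity assumption on $\psi$. Substituting $u = \psi^{-1}(x)$ and $v = (\psi^\ast)^{-1}(x)$ gives $\psi^{-1}(x)(\psi^\ast)^{-1}(x) \le 2x$ at once. The hypothesis $\lim_{x\to\infty}\psi(x)/x = \infty$ enters only to ensure that $\psi^\ast$ is finite on $[0,\infty)$ and tends to infinity, so that its (possibly generalized) inverse is well defined on $[0,\infty)$. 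This settles both the right-hand inequality in the first part and the entire second part of the lemma.

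For the lower bound I assume $\psi$ is a Young function and pick the candidate $v := \psi(u)/u$, where $u := \psi^{-1}(x)$, so that $uv = x$. The aim is to show $\psi^\ast(v) \le x$, for then $v \le (\psi^\ast)^{-1}(x)$ and consequently
\[
\psi^{-1}(x)(\psi^\ast)^{-1}(x) \ge uv = x.
\]
To bound $\psi^\ast(v) = \sup_{t\ge 0}(tv - \psi(t))$ I will split the supremum at $t = u$: for $t \le u$ one has $tv - \psi(t) \le tv \le uv = x$, while for $t > u$ convexity of $\psi$ together with $\psi(0) = 0$ makes $t \mapsto \psi(t)/t$ nondecreasing, hence $\psi(t)/t \ge \psi(u)/u = v$, so that $tv - \psi(t) \le 0$. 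Taking the supremum gives the desired inequality.

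The main obstacle is essentially bookkeeping rather than substance: one must interpret $(\psi^\ast)^{-1}$ (and if necessary $\psi^{-1}$) as the right-continuous generalized inverse when $\psi^\ast$ has flat pieces, in which case the implication $\psi^\ast(v) \le x \Rightarrow v \le (\psi^\ast)^{-1}(x)$ remains valid by the standard properties of generalized inverses. Once the test point $v = \psi(u)/u$ is identified for the lower bound, the rest is a routine monotonicity/convexity argument.
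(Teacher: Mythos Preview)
Your argument is correct and is the standard short proof of this classical Orlicz-space fact. Note, however, that the paper does not actually prove Lemma~\ref{lemma_inverses}: it is stated in the Appendix among ``standard facts from the theory of Orlicz spaces'' presented without proof for the reader's convenience. So there is no paper proof to compare against; your write-up simply supplies the missing (routine) details, with the Young inequality $uv\le \psi(u)+\psi^\ast(v)$ yielding the upper bound and the test point $v=\psi(u)/u$ together with monotonicity of $t\mapsto\psi(t)/t$ yielding the lower bound.
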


\begin{lemma}\label{asymptotic_convexity}
Let $\varphi$ and $\psi$ be two Young functions. Assume that $\lim_{x\to \infty} \varphi(x)/x = \infty$. If $\varphi^{-1} \circ \psi$ is equivalent to a Young function, then so is $(\psi^\ast)^{-1}\circ \varphi^\ast$.
\end{lemma}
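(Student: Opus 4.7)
The plan is to reduce both the hypothesis and the desired conclusion to condition (iii) of Lemma \ref{asymptotic_convexity_MS}, and then pass from one to the other via the duality estimate of Lemma \ref{lemma_inverses}. Applying Lemma \ref{asymptotic_convexity_MS} with $(F,G) = (\varphi^{-1}, \psi^{-1})$, the hypothesis that $\varphi^{-1}\circ\psi = F\circ G^{-1}$ is equivalent to a Young function translates to the existence of constants $C_1, x_1 > 0$ with
$$\frac{\varphi^{-1}(sx)}{\varphi^{-1}(x)} \ge C_1^{-1}\frac{\psi^{-1}(sx)}{\psi^{-1}(x)} \qquad \text{for all } s\ge 1,\ x\ge x_1.$$
Analogously, taking $(F,G) = ((\psi^\ast)^{-1},(\varphi^\ast)^{-1})$, the conclusion that $(\psi^\ast)^{-1}\circ\varphi^\ast$ is equivalent to a Young function is equivalent to the analogous inequality with $\varphi^{-1},\psi^{-1}$ replaced by $(\varphi^\ast)^{-1},(\psi^\ast)^{-1}$. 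So the task reduces to converting one such inequality into the other.

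A preliminary remark is that $\psi(x)/x \to \infty$. Indeed, since $\varphi^{-1}\circ\psi$ is equivalent to a Young function it dominates some linear function at infinity, so $\varphi^{-1}(\psi(x)) \ge c\,x$ for $x$ large, hence $\psi(x) \ge \varphi(cx)$ and therefore $\psi(x)/x \ge \varphi(cx)/x \to \infty$ by the assumption on $\varphi$. This guarantees that $\psi^\ast$ is a Young function with $\psi^\ast(\infty)=\infty$, so that $(\psi^\ast)^{-1}$ is defined on $[0,\infty)$ and Lemma \ref{lemma_inverses} applies to both $\varphi$ and $\psi$, giving
$$\frac{y}{\varphi^{-1}(y)} \le (\varphi^\ast)^{-1}(y) \le \frac{2y}{\varphi^{-1}(y)}, \qquad \frac{y}{\psi^{-1}(y)} \le (\psi^\ast)^{-1}(y) \le \frac{2y}{\psi^{-1}(y)}$$
for $y$ large.

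With these estimates in hand, bounding the ratio $(\psi^\ast)^{-1}(sy)/(\psi^\ast)^{-1}(y)$ from below by $\tfrac{1}{2}\, s\psi^{-1}(y)/\psi^{-1}(sy)$ and the ratio $(\varphi^\ast)^{-1}(sy)/(\varphi^\ast)^{-1}(y)$ from above by $2\,s\varphi^{-1}(y)/\varphi^{-1}(sy)$, the target inequality
$$\frac{(\psi^\ast)^{-1}(sy)}{(\psi^\ast)^{-1}(y)} \ge C_2^{-1}\frac{(\varphi^\ast)^{-1}(sy)}{(\varphi^\ast)^{-1}(y)}$$
is implied (with $C_2 = 4C_1$) by the inequality already supplied by the hypothesis. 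This verifies condition (iii) of Lemma \ref{asymptotic_convexity_MS} for the pair $((\psi^\ast)^{-1},(\varphi^\ast)^{-1})$, and hence $(\psi^\ast)^{-1}\circ\varphi^\ast$ is equivalent to a Young function.

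I do not expect any real obstacle. The one delicate point is establishing $\psi(x)/x\to\infty$ so that Lemma \ref{lemma_inverses} is applicable on both sides and the statement itself is meaningful; this is dispatched in the short preliminary step above. The remaining argument is a mechanical rearrangement of the inequalities, with all multiplicative constants absorbed into the equivalence relation $\simeq$.
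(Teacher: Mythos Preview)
Your proof is correct and follows essentially the same route as the paper: apply Lemma~\ref{asymptotic_convexity_MS} with $(F,G)=(\varphi^{-1},\psi^{-1})$ to translate the hypothesis into condition (iii), use Lemma~\ref{lemma_inverses} to replace $\varphi^{-1},\psi^{-1}$ by $(\varphi^\ast)^{-1},(\psi^\ast)^{-1}$ (picking up a factor of $4$), and then invoke Lemma~\ref{asymptotic_convexity_MS} again. Your justification of $\psi(x)/x\to\infty$ is in fact more explicit than the paper's, which simply asserts it.
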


\begin{proof} It is easy to see that under the assumptions of the lemma we also have $\lim_{x\to \infty}\psi(x)/x = \infty$ and thus $\varphi^\ast(x)$, $\psi^\ast(x)$  are finite for all $x \ge 0$. Applying Lemma \ref{asymptotic_convexity_MS} with $F = \varphi^{-1}$, $G = \psi^{-1}$, we get that
\begin{displaymath}
\frac{\varphi^{-1}(sx)}{\psi^{-1}(sx)} \ge C^{-1}\frac{\varphi^{-1}(x)}{\psi^{-1}(x)}
\end{displaymath}
for some $C > 0$, all $s \ge 1$ and $x$ enough. By Lemma \ref{lemma_inverses} we obtain for $x$ large enough,
\begin{displaymath}
\frac{(\psi^\ast)^{-1}(sx)}{(\varphi^\ast)^{-1}(sx)} \ge (4C)^{-1}\frac{(\psi^\ast)^{-1}(x)}{(\varphi^\ast)^{-1}(x)},
\end{displaymath}
which by another application of Lemma \ref{asymptotic_convexity_MS} ends the proof.

\end{proof}

\bibliographystyle{abbrv}

\end{document}